\def\i{\iota}
\def\u{\upsilon}
\chardef\tempcat=\the\catcode`\@ \catcode`\@=11
\def\cyracc{\def\u##1{\if \i##1\accent"24 i
    \else \accent"24 ##1\fi }}
\DeclareFontFamily{OT1}{msb}{}{} \DeclareFontShape{OT1}{msb}{m}{n}
 {  <5> <6> <7> <8> <9> <10> gen * msbm
      <10.95><12><14.4><17.28><20.74><24.88>msbm10}{}
\DeclareMathAlphabet{\bubble}{OT1}{msb}{m}{n}
\newfont{\goth}{eufm10 scaled \magstep1}
\def\gg{\mbox{\goth g}}
\def\gh{{\mbox{\goth h}}}
\def\gm{\mbox{\goth m}}
\def\gn{\mbox{\goth n}}
\def\gq{\mbox{\goth q}}
\newfont{\mcal}{eusm10 scaled \magstep1}
\def\square{\kern1pt\vbox
            {\hrule height 0.6pt\hbox{\vrule width 0.6pt\hskip 3pt
 \vbox{\vskip 6pt}\hskip 3pt\vrule width 0.6pt}\hrule height 0.6pt}\kern1pt}
\def\ra{\to}
\newtheorem{Th}{Theorem}
\newtheorem{Prop}{Proposition}
\newtheorem{Cor}{Corollary}
\newtheorem{Lem}{Lemma}
\newtheorem{Def}{Definition}
\newtheorem{Ex}{Exercise }
\newtheorem{Exa}{Example}
\newtheorem{rem}{Remark}
\def\bexa{\begin{Exa}}
\def\eexa{\end{Exa}}
\def\bt{\begin{Th}}
\def\et{\end{Th}}
\def\bp{\begin{Prop}}
\def\ep{\end{Prop}}
\def\bc{\begin{Cor}}
\def\ec{\end{Cor}}
\def\bl{\begin{Lem}}
\def\el{\end{Lem}}
\def\bd{\begin{Def}}
\def\ed{\end{Def}}
\def\bex{\begin{Ex}}
\def\eex{\end{Ex}}
\def\be{\begin{equation}}
\def\ee{\end{equation}}
\def\ben{\begin{enumerate}}
 \def\een{\end{enumerate}}
\def\ba{\begin{array}{rlll}}
\def\ea{\end{array}}
\def\bea{\begin{eqnarray}}
\def\eea{\end{eqnarray}}
\def\bean{\begin{eqnarray*}}
\def\eean{\end{eqnarray*}}
\begin{document}

\title{Tanaka  structures (non holonomic $G$-structures) and Cartan connections}

\author{Dmitri V. Alekseevsky and Liana David}

\maketitle

{\bf Abstract} Let $\gh = \gh_{-k} \oplus \cdots \oplus \gh_{l}$
($k> 0$, $l\geq 0$) be a finite dimensional graded Lie algebra,
with a Euclidean  metric $\langle \cdot , \cdot\rangle$ adapted to the
gradation. The metric $\langle\cdot ,\cdot \rangle$ is called admissible
if the codifferentials $\partial^{*} : C^{k+1} (\gh_{-}, \gh ) \ra
C^{k} (\gh_{-} , \gh )$ ($k\geq 0$) are
$Q$-invariant ($\mathrm{Lie}(Q) = \gh_{0}\oplus
\gh_{+}$). We find necessary and sufficient conditions for a
Euclidean metric, adapted to the gradation, to
be admissible, and we develop a theory of normal Cartan
connections, when these conditions are satisfied.  We show how the
treatment from \cite{cap}, about normal Cartan connections of
semisimple type, fits into our theory. We also consider in some detail
the case when $\gh := t^{*}(\gg)$ is  the cotangent Lie algebra of
a non-positively graded Lie algebra $\gg$.\\

{\bf Key words:}  Tanaka structures, (normal) Cartan connections,
parabolic geometry, (prolongation of) $G$-structures.

\section{Introduction}

The theory of $G$-structures, which is  a coordinate free version
of the {\it r\'ep\'ere mobile} (moving frame) method  by E.
Cartan, provides a powerful tool for the investigation of
different geometric structures on an $n$-dimensional manifold $M$.
 If a geometric  structure, say, a tensor
 field $A$, is infinitesimally homogeneous, i.e. takes the same constant value $A_0$ at any point
$p\in M$ with respect to an appropriate  frame $f_{p} = (e_1,
\cdots , e_n)$, then the set of such adapted frames forms a
$G$-structure $\pi : P \to M$. Applying  the prolongation
procedure to this $G$-structure, one can construct an absolute
parallelism (and, in best cases, a Cartan connection), which can
be used to find invariants for the given geometric structure. More
precisely, recall that the first prolongation of the $G$-structure
$\pi : P \ra M$ is a $G^{(1)}$-structure $\pi^{(1)} : P^{(1)} \ra
P$, with commutative structure group $G^{(1)}= \gg \otimes V^{*}
\cap V \otimes S^{2}(V^{*})$ (where $\gg = \mathrm{Lie}(G)$). The
bundle $\pi^{(1)} : P^{(1)} \ra P$ is not unique: it depends on
the choice of a subspace $D\subset V \otimes S^{2}(V^{*})$,
complementary to $\partial ( \gg \otimes V)$, where $\partial :
\gg \otimes V \ra V \otimes S^{2}(V^{*})$ is the
skew-symmetrization. If the group $G$ is simple, then (in most
cases) $G^{(1)}= \{e\}$  and $\pi^{(1)}: P^{(1)}\to P$ is an
$\{e\}$-structure, or an absolute parallelism, on the bundle of
frames $P$. If moreover $D$ is $G$-invariant, then the $\{
e\}$-structure $\pi^{(1)}: P^{(1)} \ra P$ is identified with a
Cartan connection of type $V \oplus \gg$, i.e. a $G$-equivariant
map $\kappa :TP \to  V\oplus\gg$  such that  $\kappa_p: T_pP \to V
\oplus \gg$ (for any $p\in P$) is an isomorphism, which is  an extension of the
vertical parallelism
 $i_p : T^{\mathrm{vert}}_pP \to \gg$.
 In the more general case  when $\pi: P \to M$ is a $G$-structure of
finite  type, i.e. the $k$-th prolongation
 $G^{(k)}= \gg \otimes S^{k}(V^{*}) \cap V \otimes S^{k+1}(V^{*})$, for  some  $k >0$, is  trivial,
 we get an absolute parallelism on the $k$-th prolongation
 $\pi^{(k)} : P^{(k)} \to P^{(k-1)}$  of the $G$-structure $\pi : P \ra M$ (which,  
in general,  is not  a Cartan connection).

The approach described above does not work if the $G$-structure
has infinite  type, i.e. the full prolongation
$\mathfrak{g}^{(\infty)}$ is infinite dimensional (e.g. for
symplectic or contact structures). To overcome this difficulty, N.
Tanaka developed in \cite{ta1,ta2} a generalization of the theory
of $G$-structures to "non-holonomic $G$-structures" (called Tanaka
structures in \cite{spiro} and infinitesimal flag structures in
\cite{cap}), which are roughly speaking principal subbundles of
frames on a non-holonomic distribution. Tanaka defined the
prolongation of non-positively graded Lie algebras and under some
assumptions, he associated to a non-holonomic $G$-structure a
canonical Cartan connection. There are several expositions of
different versions and generalizations of Tanaka theory
\cite{spiro,cap,sch,morimoto, tanaka-ad,zelenko}. The aim of this
paper is to give a self contained exposition of the Tanaka theory
and to investigate relations between Tanaka structures  and Cartan
connections. We generalize the results by A. ${\check C}$ap and J.
Slovak \cite{cap}, where the special case of parabolic geometry is
studied in detail.

The paper is structured as follows. In Section \ref{prelim},
intended to fix notation,  we recall basic definitions about
graded Lie algebras, Tanaka structures and Cartan connections. Our
approach follows closely \cite{spiro} and \cite{cap}.

In  Section  \ref{cartan-tanaka-inv} we study the relation between
Cartan connections and Tanaka structures. We show that a regular
Cartan connection $\kappa \in \Omega^{1} (P, \gh )$ on a principal
$Q$-bundle $\pi : P \ra M$ induces a Tanaka structure $({\mathcal
D}, \pi_{G} : P_{G} \ra M)$ and conversely, that a Tanaka
structure is always induced by a regular Cartan connection (see
Propositions \ref{cor-tanaka} and \ref{existence-parabolic}). In
the semisimple case (i.e. $\gh $ semisimple with a $|k|$-gradation
and $\mathrm{Lie}(Q) = \gq = \gh_{0} \oplus \gh_{+}$), this was
done in \cite{cap}. We remark that the arguments from \cite{cap}
used to prove these statements require no semisimplicity
assumptions. We simplify and adapt them to our more general
setting.

In  Section  \ref{normal} we  generalize   the  theory  of 
{\bf normal} Cartan connections,  defined  in \cite{cap} for Cartan connections of
semisimple  type, to the non-semisimple case. Let $\pi : P \ra M$ be a principal
$Q$-bundle and  $\kappa : TP \to \gh$  a Cartan connection,
 where $\gh $ is  a graded  semisimple Lie  algebra  with
 non-negative   subalgebra
$\gq = \mathrm{Lie}(Q) = \gh_{0}\oplus \gh_{+}$. Using the
(non-degenerate) Killing form $B$ and an appropriate  Cartan
involution $\theta$ of $\gh$, one  can define a Euclidean metric $B_{\theta} = -
B(\theta \cdot , \cdot )$ adapted to the gradation, see
(\ref{angle}). It turns out that the codifferentials $\partial^{*} :
C^{k+1} (\gh_{-}, \gh )\ra C^{k} (\gh_{-}, \gh )$ (defined as the
metric adjoints of the Lie algebra differentials $\partial : C^{k}
(\gh_{-}, \gh )\ra C^{k+1} (\gh_{-}, \gh )$, with respect to the
metric induced by $B_{\theta}$ on $\{ C^{k}(\gh_{-}, \gh ),\ k\geq
0\}$) are ${Q}$-invariant (with $Q$-action induced by the adjoint
action). This invariancy is the crucial property of  
the theory of normal Cartan connections, which are 
defined   as  Cartan  connections   with   coclosed
 curvature  form, see \cite{cap}. Using  the  semisimple  case
 as motivation, in Subsection \ref{sect-inv} we consider $\gh$ a graded
(not necessarily semisimple) Lie algebra  with a fixed
Euclidean metric $\langle \cdot , \cdot
\rangle$, adapted to the gradation.
 We call the metric
$\langle \cdot , \cdot \rangle$ admissible if the codifferentials
$\partial^{*}: C^{k+1} (\gh_{-}, \gh ) \ra C^{k} (\gh_{-}, \gh )$
($k\geq 0$) are $Q$-invariant (see Definition
\ref{def-admissible}). We characterize admissible metrics (see
Proposition \ref{criterion}). As a consequence, we reobtain in our
setting that the standard metric $B_{\theta}$ on a graded
semisimple Lie algebra is admissible and we show that admissible
metrics exist also on (graded) non-semisimple Lie algebras (see
Corollary \ref{check-ss} and Remark \ref{non-ss}). We develop a
theory of normal Cartan connections of type $\gh$, where $\gh$ is
a graded Lie algebra with an admissible metric, and we show that
various facts from the theory of normal Cartan connections of
semisimple type \cite{cap} are preserved in this more general
setting. In particular, any Tanaka structure which is induced by a
regular Cartan connection of type $\gh$ is induced also by a
normal Cartan connection of type $\gh$ and the cohomology group
$H^{1}_{\geq 1} (\gh_{-}, \gh )$ is the only obstruction for the
uniqueness (up to bundle automorphisms) of a normal Cartan
connection of type $\gh$ inducing a given Tanaka structure (see
Theorems \ref{existence} and \ref{uniqueness}). Our proofs for
these two theorems are based on the arguments from \cite{cap}. We
simplify these arguments and show that no semisimplicity
assumptions are required. While for most graded semisimple Lie
algebras $\gh$, the cohomology group $H^{1}_{\geq 1}(\gh_{-}, \gh
)$ is trivial, our computations from the next section  show
that this is not true in general.

In Section \ref{examples}, which is entirely algebraic, we
consider in detail the cotangent Lie algebra $\gh  = t^{*}(\gg )$
of a non-positively graded Lie algebra $\gg$. We show that $\gh$
inherits a gradation from the gradation of $\gg$, with negative
part $\gg_{-}$. In Subsection \ref{cohomology-ctg}  we compute the
cohomology group $H^{1}_{\geq 1}(\gh_{-}, \gh )$. In general, it
is non-trivial. As an illustration of our computations, in
Subsection \ref{exemplu} we describe $H^{1}_{\geq 1}(\gh_{-}, \gh
)$, in the simplest case when $\gg = \gg_{-1} \oplus \gg_{0}$ has
a non-positive gradation of depth one. Cohomology groups for
cotangent Lie algebras are of independent interest and appear in
the literature in various settings (see e.g. \cite{tirao}, for the
cohomology of the cotangent bundle of Heisenberg group). In
Subsection \ref{metrics} we return to the topic of admissible
metrics.We assume that $\gg$ has a Euclidean metric $\langle \cdot ,
\cdot \rangle_{\gg}$, adapted to the gradation,  and we define a
metric $\langle \cdot , \cdot \rangle$ on $\gh$, which coincides
with the given metric  $\langle \cdot ,
\cdot \rangle_{\gg}$  on $\gg$ and $\gg^{*}$ (identified with
$\gg$ using $\langle\cdot , \cdot \rangle_{\gg}$) and such that
$\gg$ is orthogonal to $\gg^{*}$ with respect to $\langle\cdot ,
\cdot\rangle$. Our motivation to consider such a metric on $\gh$
comes from its formal similarity with the standard metric
$B_{\theta}$ on semisimple Lie algebras. More precisely, both
$\langle \cdot , \cdot \rangle$ and $ B_{\theta}$ are of the form
$B(\theta (\cdot ), \cdot )$, where $\theta : \gh \ra \gh$ is
linear, bijective, and $B\in S^{2}(\gh^{*})$ is a bi-invariant
non-degenerate symmetric form: when $\gh = t^{*}(\gg )= \gg \oplus
\gg^{*}$ is the cotangent Lie algebra, $B$ is the standard metric
of neutral signature of $\gh$ and $\theta : \gg\oplus\gg^{*} \ra
\gg\oplus \gg^{*}$, $\theta (\gg ) = \gg^{*}$, $\theta (\gg^{*} )
= \gg$ is induced by the Riemannian duality defined by
$\langle\cdot , \cdot \rangle_{\gg}$; when $\gh$ is semisimple,
$B$ the Killing form of $\gh$ and $\theta : \gh \ra \gh$ is
(minus) a Cartan involution. We prove that there are strong
obstructions for $\langle \cdot , \cdot \rangle$ to be admissible 
(see Proposition \ref{criterion-adm} and Corollary \ref{consecinta}). The problem to
find admissible metrics on cotangent Lie algebras
remains open and will be studied in the future.

At the end of this introduction, we mention that the general
question of existence of a canonical  Cartan connection associated
to a Tanaka structure was considered, in large generality, also in the
paper \cite{morimoto1} by T. Morimoto,  in the framework  of  his
remarkable  theory of filtered  manifolds. There it was associated,
to any Tanaka structure of type $(\gm, G)$, for which the so called
(C)-condition is satisfied (see Definition 3.10.1 of
\cite{morimoto1}), a canonical Cartan connection $\theta$ (see
Theorem 3.10.1 of \cite{morimoto1}). The principal bundle $P$ on
which $\theta$ is defined was constructed by an elaborated,
induction procedure. The Lie algebra of the structure group $K$ of
$P$ is the non-negative part of the prolongation $(\gm \oplus \gg
)^{\infty}$ of $\gm \oplus \gg$ (when this prolongation is finite
dimensional). The canonical Cartan connection $\theta$ takes values
in $(\gm \oplus \gg )^{\infty}$. Now, as remarked in the proof of
Proposition 3.10.1 of \cite{morimoto1}, if $(\gm \oplus \gg )^{\infty}$
admits a Euclidean metric $\langle \cdot , \cdot \rangle$, adapted
to the gradation, and such that the codifferentials $\partial^{*} :
C^{k+1} (\gm , (\gm \oplus \gg )^{\infty}) \ra  C^{k} (\gm ,(\gm \oplus \gg
)^{\infty})$  ($k\geq 0$) are $K$-invariant (i.e., in our language,
$\langle \cdot , \cdot \rangle$ is admissible), then the condition
(C) is satisfied. Note  that  Morimoto gives  a general
construction of a  canonical  Cartan   connection   associated
 to a Tanaka  structure, but  he  does not   consider  normal Cartan  connections,  i.e.
connections with coclosed curvature. While Morimoto's theory covers a large class of situations, it is
also much more technical compared to our simple, direct approach.\\

{\bf Acknowledgements.} D.V.A. is partially supported by the project
C.Z.1.07/2.3.00/20.0003 of the Operational Programme Education for
Competitiveness of the Ministery of Education, Youth and Sports of
the Czech Republic. L.D. is partially supported by the Romanian
National Authority for Scientific Research, CNCS-UEFISCDI, project
no. PN-II-ID-PCE-2011-3-0362.

\section{Preliminary material}\label{prelim}

In this section, intended to fix notation,  we recall the basic
facts we need about Tanaka structures and Cartan connections (see
e.g. \cite{spiro,cap}).

\subsection{Tanaka structures}

\subsubsection{Graded Lie algebras}

Let $\gh$ be a finite dimensional Lie algebra. A decomposition
$\gh = \gh_{-k}\oplus\cdots \oplus\gh_{\ell}$ (where $k>0$ and
$\ell\geq 0$) of $\gh$ is called a {\bf gradation of depth $k$} if
$[\gh_{i}, \gh_{j}]\subset \gh_{i+j}$,  for any $i,j$ (with the
convention that $\gh_{i}=0$ when $i< -k$ and $i>l$). We will
always assume that such a gradation is {\bf fundamental}, i.e. the
subalgebra $\gh_{-}:= \gh_{-k}\oplus \cdots \oplus \gh_{-1}$ is
generated by $\gh_{-1}$, the subspaces $\gh_{0}$, $\gh_{-k}$ and
$\gh_{\ell}$ are non-trivial and the adjoint action of $\gh_{0}$
on $\gh_{-1}$ is exact. Let $\gh^{i}:= \oplus_{j\geq i} \gh_{j}$
be the associated filtration.

The space $C^{i}(\gh_{-}, \gh )= \Lambda^{i}(\gh_{-}^{*})\otimes
\gh$ of $\gh$-valued $i$-forms on $\gh_{-}$ inherits  a gradation
$C^{i}(\gh_{-}, \gh )=\oplus_{j} C^{i}_{j}(\gh_{-}, \gh )$, where
$$
C^{i}_{j}(\gh_{-}, \gh ):= \sum_{s_{1},\cdots
,s_{i}}(\gh_{s_{1}}\wedge \cdots\wedge \gh_{s_{i}})^{*}\otimes
\gh_{s_{1}+\cdots +s_{i}+j}
$$
is the space of $i$-forms of homogeneous degree $j$. We shall use
the notation
$$
\mathrm{gr}_{j}: C^{i}(\gh_{-}, \gh ) \ra C^{i}_{j}(\gh_{-}, \gh
), \quad \phi \ra \mathrm{gr}_{j} (\phi ) = \phi_{j}
$$
for the natural projection and $C^{i}(\gh_{-}, \gh )^{m}
=\oplus_{j\geq m} C^{i}_{j}(\gh_{-} , \gh )$ for the filtration
associated to the gradation. The Lie algebra differential
$\partial : C^{k}(\gh_{-}, \gh )\ra C^{k+1}(\gh_{-}, \gh )$
defined by
\begin{align}
\label{partial-exp}&(\partial \phi )(X_{0}, \cdots  , X_{k}):=
\sum_{0\leq i\leq k} (-1)^{i} [X_{i}, \phi (X_{0}, \cdots ,
\widehat{X_{i}}, \cdots ,
X_{k}) ] \\
\nonumber & + \sum_{0\leq i<j\leq k} (-1)^{i+j} \phi \left(
[X_{i}, X_{j}], X_{0}, \cdots , \widehat{X_{i}}, \cdots ,
\widehat{X_{j}}, \cdots , X_{k} \right)
\end{align}
(for $X_{i}\in \gh_{-}$, $0\leq i \leq k$) is homogeneous of
degree zero, i.e. it preserves gradations (the hat means that
the argument is omitted).

\subsection{Definition of Tanaka structure}

Let $\mathcal D$ be a distribution on a manifold $M$. It
determines a filtration
\begin{equation}\label{filtration-tan}
\cdots {\mathcal D}_{-d-1} (p) = {\mathcal D}_{-d}(p) \supset
{\mathcal D}_{-d+1}(p) \supset  \cdots \supset {\mathcal
D}_{-1}(p) = {\mathcal D}(p)
\end{equation}
of any tangent space $T_{p}M$, where each subspace ${\mathcal
D}_{-j}(p)$ is spanned by ${\mathcal D} (p)$ and the values at $p$
of  commutators of vector fields $X_{1}, \cdots , X_{s}$ ($2\leq s
\leq j$) in $\mathcal D .$ The commutator of vector fields induces
the structure of a graded Lie algebra on each graded vector space
$$
\gm (p) = \mathrm{gr}_{\mathcal D} (T_{p}M) = \gm_{-d}(p)
\oplus\cdots \oplus \gm_{-1}(p)
$$
associated to the filtred space $T_{p}M$, where $\gm_{-1} (p) =
{\mathcal D}_{-1}(p)$ and $\gm_{-i}(p) = {\mathcal D}_{-i}(p)/
{\mathcal D}_{-i+1} (p)$, for any $2\leq i\leq d.$

A distribution $\mathcal D$ is called {\bf regular  of type $\gm$
(and  depth $k$)} if all Lie algebras $\gm (p)$, $p\in M$, are
isomorphic to a given negatively graded fundamental Lie algebra
$\gm = \gm_{-k}\oplus\cdots \oplus \gm_{-1}.$ If $\mathcal D$ is
regular of type $\gm$, an isomorphism $\xi :{\gm}_{-1}\ra
{\mathcal D} (p)$ is called an {\bf adapted frame} if it can be
extended to an isomorphism $\hat{\xi}: \gm\ra
\mathrm{gr}_{\mathcal D}(T_{p}M)$ of graded Lie algebras. We
remark that if such an extension $\hat{\xi}$ exists, then it is
unique. The group $\mathrm{Aut}_{\mathrm{gr}}(\gm )$ of (grading
preserving) automorphisms of the Lie algebra $\gm$ acts simply
transitively on the set $\mathbb{L}(M, {\mathcal D})$ of all
adapted frames. The natural projection $\pi_{0} : \mathbb{L}(M,
{\mathcal D}) \ra M$ is a principal
$\mathrm{Aut}_{\mathrm{gr}}(\gm )$-bundle, called the {\bf bundle
of $\mathcal D$-frames}.

\bd Let $G$ be a Lie subgroup of $\mathrm{Aut}_{\mathrm{gr}}(\gm
).$ A {\bf Tanaka structure of type $(\gm, G)$} on $M$ is a
regular distribution $\mathcal D$ of type $\gm$ together with a  principal
$G$-subbundle $\pi_{G} : P_{G}\subset \mathbb{L}(M, {\mathcal
D})\rightarrow M$ of the bundle of $\mathcal D$-frames. \ed

Let $\gg : =\mathrm{Lie}(G).$ 
We  denote by $\gm ( \gg )= \gm \oplus \gg$ the
non-positively graded Lie algebra associated to $(\gm, G)$ and by $\gm (\gg )^{\infty}$
its Tanaka prolongation, defined as the maximal graded Lie algebra
with non-positive part isomorphic to $\gm (\gg )$ (see \cite{ta2}).

\subsection{Cartan connections}

\subsubsection{Definition of Cartan connections}

Let $H$ be a Lie group and $Q$ a subgroup of $H$. We denote by
$\gh$ and $\gq$ the Lie algebras of $H$ and $Q$ respectively. Let
$\pi : P \ra M$ be a principal $Q$-bundle. For $a\in \gq$, we
denote by $X^{a}(p) = p \cdot \mathrm{exp} (a)$ the fundamental
vector field on $P$ generated by $a$.

\bd A $1$-form $\kappa\in \Omega^{1}(P, \gh )$ is called a {\bf
Cartan connection of type $\gh$} if:

i) $\kappa (X^{a})=a  $, for  any  $a \in    \gq$;

ii) $\kappa\vert_{T_{u}P} : T_{u} P \ra \gh$ is an isomorphism,
for any $u\in P$;

iii) $(r^{g})^{*}(\kappa ) = \mathrm{Ad}(g^{-1})\circ \kappa$, for
any $g\in Q$.

\ed

The following simple lemma will be useful for our purposes.

\bl\label{tangent-as} The map
\begin{equation}
P\times_{Q} ( \gh / \gq ) \ra TM,\quad [u, \hat{a}] \ra \pi_{*}
(X^{a}_{u}), \quad (u,a)\in P\times \gh
\end{equation}
is an isomorphism. Above  $a\in \gh$ is any representative of
$\hat{a}\in \gh / \gq$ and $X^{a}:= \kappa^{-1}(a)\in {\mathcal
X}(P)$ (is called a {\bf constant vector field}).

\el

The {\bf  curvature} of a  Cartan connection $\kappa$ is a
$2$-form $\Omega\in \Omega^{2}(P, \gh )$ defined by:
$$
\Omega (X,Y)= d\kappa (X,Y) + [\kappa (X), \kappa (Y)] ,\ \forall
X, Y\in TP.
$$
From definitions,
\begin{equation}\label{omega-curv}
\Omega (X^{a}, X^{b}) = - \kappa ( [X^{a}, X^{b}]) + [a, b], \quad
\forall a,b\in \gh ,
\end{equation}
i.e. $\Omega$ measures the failure of $\kappa : {\mathcal X}(P)
\rightarrow \gh$ to be a Lie algebra homomorphism. The curvature
$\Omega\in \Omega^{2}(P, \gh)$ is  invariant, i.e.
($r^{g})^{*}(\Omega ) = \mathrm{Ad}(g^{-1}) \circ \Omega$, for any
$g\in Q$, and horizontal, i.e. $\Omega (X^{a}, \cdot )=0$, for any
$a\in \gq .$ In particular, $\Omega$ is a section of
$\Lambda^{2}(M )\otimes {\mathcal A}(M)$, where ${\mathcal A}(M) =
P\times_{Q}\gh $ is the {\bf adjoint tractor bundle}. The {\bf
curvature function} is defined by
$$
K: P \rightarrow \Lambda^{2}(\gh / \gq )^{*} \otimes\gh , \quad
K(u)( \hat{a}, \hat{b}) = \Omega (X^{a}_{u}, X^{b}_{u}), \quad
u\in P,\ \hat{a}, \hat{b}\in \gh / \gq .
$$

\subsubsection{Cartan connections of graded type}

Let $\kappa\in \Omega^{1}(P, \gh )$ be a Cartan connection on a
principal $Q$-bundle $\pi : P \ra M$. It is called of {\bf graded
type} if a fundamental gradation
\begin{equation}\label{graded-dec}
\gh = \gh_{-k}\oplus \cdots \oplus \gh_{l}= \gh_{-}\oplus \gh_{0}\oplus \gh_{+}
\end{equation}
is given, such that $\gq = \mathrm{Lie}(Q)= \gh_{0} \oplus
\gh_{+}.$ The gradation of $\gh$ induces a gradation of $TP$:
\begin{equation}\label{gradation-P}
TP= (TP)_{-k}\oplus \cdots \oplus (TP)_{\ell},\ (TP)_{i}:=
\kappa^{-1}(\gh_{i}).
\end{equation}
The gradations of $TP$ and $\gh$ allow decompositions of the
curvature $\Omega$ and the curvature function $K$ into homogeneous
components
$$
\Omega = \sum_{i,j,m}\Omega_{ij,m},\quad K = \sum_{i,j,m} K_{ij,m}
$$
where $\Omega_{ij,m}$  (respectively $K_{ij, m}$) are  identified
with sections of $(TP)_{i}^{*}\wedge (TP)_{j}^{*}\otimes \gh_{m}$
(respectively, functions $: P \ra (\gh_{i})^{*} \otimes
(\gh_{j})^{*}\otimes \gh_{m}$) and have homogeneous degree $d:=
m-i-j$.

\bd\label{def-regular} A Cartan connection $\kappa$ is called {\bf
regular} if it is of graded type and all the homogeneous
components of its curvature function $K$ have positive degree.\ed

\section{Cartan connections and Tanaka structures}\label{cartan-tanaka-inv}

\subsection{From  Cartan connections to Tanaka structures}\label{cartan-tanaka}

Let $\gh = \gh_{-k} \oplus \cdots \oplus \gh_{l}$ be a graded Lie
algebra and $\kappa \in \Omega^{1}(P, \gh )$ a Cartan connection
of graded type (not necessarily regular) on a principal $Q$-bundle
$\pi : P\ra M$. Assume that the Lie group $Q$ is decomposed into a
semidirect product $Q = G\cdot N:= H_{0}\cdot H^{+}$, according to
the semidirect decomposition $\gq = \gh_{0}\oplus \gh_{+}$. We
define a $Q$-invariant flag of distributions
\begin{equation}\label{distr-P}
T^{i}P:= \kappa^{-1} (\gh^{i})= \sum_{j\geq i} (TP)_{j}, \quad
-k\leq i\leq l
\end{equation}
of $TP$ associated to the gradation (\ref{gradation-P}). Like in
\cite{cap}, the quotient $P_{G}: = P/ N$, with the natural
projection $\pi_{G}: P_{G}\ra M$ is a principal $G$-bundle. The
natural  projections $\tilde{\pi} : P \ra P_{G}$ and
$\pi_{G}:P_{G}\ra M$ map the flag of distributions on $P$ to a
$G$-invariant flag of distributions
$$
TP_{G}= T^{-k}P_{G}\supset T^{-k+1}P_{G}\supset \cdots\supset
T^{0}P_{G}=T^{\mathrm{vert}}({\pi}_{G})
$$
on $P_{G}$ and to a flag of distributions
\begin{equation}\label{filtration-M}
TM = T^{-k}M\supset T^{-k+1}M\cdots \supset T^{-1}M
\end{equation}
on $M$. We define the graded tangent bundle
$$
\mathrm{gr}(TM)=(TM)_{-k} \oplus (TM)_{-k+1}\oplus\cdots \oplus (TM)_{-1}, \quad
\mathrm{gr}_{-i}(TM)=(TM)_{-i}:= T^{-i}M/ T^{-i+1}M
$$
and we denote by $q_{j}: T^{j}M \ra (TM)_{j}$ the natural
projections. From Lemma \ref{tangent-as},  $T^{i}M \cong
P\times_{Q}( \gh^{i}/ \gq )$, for any $i$. It follows that
\begin{equation}\label{gr-i}
\mathrm{gr}_{i} (TM) \cong P\times_{Q}(\gh^{i}/ \gh^{i+1})=
P_{G}\times_{G}\gh_{i},\ \forall i<0,\quad \mathrm{gr}(TM) \cong
P_{G}\times_{G}\gh_{-}.
\end{equation}
Since $G$ acts on $\gh_{-}$ as a group of automorphisms, the Lie
bracket of $\gh_{-}$ induces a Lie bracket $[ \cdot , \cdot
]_{\gh}$ on $\mathrm{gr}(TM).$ The following proposition can be
proved easily \cite{cap}.

\bp The Cartan connection $\kappa$ is regular if and only if the
following two conditions hold:\

i) The flag of distributions $T^{i}M$ defines a filtration $\Gamma
(T^{i}M)$ of the Lie algebra ${\mathcal X}(M)$ of vector fields on
$M$  (i.e. $[ \Gamma (T^{i}M), \Gamma (T^{j}M) ] \subset \Gamma
(T^{i+j}M)$, for any $i , j$);\

ii)  The bracket $[\cdot , \cdot ] $ on $\mathrm{gr} (TM)$,
induced by the Lie bracket of vector fields, coincides with
$[\cdot , \cdot ]_{\gh}$.\ep

The next proposition is our main result from this section. It
shows that any regular Cartan connection defines a Tanaka structure.

\bp\label{cor-tanaka} Let $\kappa \in \Omega^{1}(P, \gh )$ be a
regular Cartan connection.\

i)  The flag of distributions $TM= T^{-k}M\supset \cdots \supset
T^{-1}M$ defined by $\kappa$ is the derived flag of the
distribution ${\mathcal D}= T^{-1}M$, which is regular of type
$\gh_{-}.$\

ii) The principal $G$-bundle $\pi_{G}: P_{G}\ra M$  is a reduction
to the structure group $G$ of the
$\mathrm{Aut}_{\mathrm{gr}}(\gh_{-})$-bundle of adapted frames
$\pi_{0}: \mathbb{L}(M, {\mathcal D}) \ra M$.\

Equivalently, $( {\mathcal D}, \pi_{G} : P_{G} \ra M)$ is a
Tanaka structure. \ep

\begin{proof} The first claim follows from the assumption that $\gh_{-1}$
generates $\gh_{-}$ and from $[\cdot , \cdot ]_{\gh} = [\cdot ,
\cdot ]$ (because $\kappa$ is regular). 
Since $\mathrm{gr}_{\mathcal D}
(TM) \cong P_{G}\times_{G}\gh_{-}$ (see (\ref{gr-i})), 
the distribution $\mathcal
D$ is regular of type $\gh_{-}$. For the
second claim, we notice that any $p_{0}\in P_{G}$ determines an
isomorphism
\begin{equation}\label{map-need}
\hat{p}_{0}: \gh_{-}\ra \mathrm{gr}_{\mathcal D}
(T_{\pi_{G}(p_{0})}M), \quad a\ra \hat{p}_{0}(a),
\end{equation}
where $\hat{p}_{0}(a) = [p_{0}, a]$ under the identification
$\mathrm{gr}_{\mathcal D}(TM) = P_{G}\times_{G} \gh_{-}$. The map
$$
i: P_{G} \ra \mathbb{L}(M, {\mathcal D}) ,\quad p_{0}\ra
i(p_{0}):= \hat{p}_{0}\in \mathbb{L}_{\pi_{G}(p_{0})}(M,{\mathcal
D})
$$
is injective and covers the natural inclusion
$G\subset \mathrm{Aut}_{\mathrm{gr}} (\gh_{-})$, i.e. is a reduction
of $\pi_{0}: \mathbb{L}(M, {\mathcal D}) \ra M$ to $G$.

\end{proof}

\subsection{The space of Cartan connections inducing a given Tanaka
structure}

We now study the relation between the regular Cartan connections which take
values in the same Lie algebra and induce the same Tanaka
structure. Propositions \ref{diff-curv} and \ref{ajut-adaugat}
below will be our main computational tool in the theory of normal
Cartan connections. Their proofs follow like in the semisimple
case treated in \cite{cap}. To keep the text short, we chose not
to reproduce them here.

Let $\kappa\in\Omega^{1}(P, \gh )$ be a regular Cartan connection
of type $\gh$ on a principal $Q$-bundle $\pi : P \ra M.$ Any other
regular Cartan connection $\tilde{\kappa} \in \Omega^{1} (P, \gh
)$ is given by $\tilde{\kappa } = \kappa +\Phi$, where $\Phi : TP
\ra \gh$ vanishes on $T^{\mathrm{vert}} P = \kappa^{-1} (\gq )$
and will be considered as a function on the quotient $TP /
T^{\mathrm{vert}}P$. Define
\begin{equation}\label{phi}
\phi : P \ra C^{1}(\gh_{-}, \gh ), \quad \phi : = \Phi\circ
\hat{\kappa}^{-1}= (\tilde{\kappa} - \kappa )\circ
\hat{\kappa}^{-1} .
\end{equation}
Above $\hat{\kappa}_{p}^{-1} : \gh_{-} \ra T_{p}P /
T^{\mathrm{vert}}_{p}P$ (for any $p\in P$) is the inverse of the
isomorphism $T_{p}P / T^{\mathrm{vert}}_{p}P \cong \gh_{-}$
induced by $\kappa_{p} : T_{p} P \ra \gh$. The function $\phi$ is
$Q$-invariant. The following holds:

\bp\label{diff-curv} i) The (regular) Cartan connections $\kappa$
and $\tilde{\kappa}$ define the same Tanaka structure if and only
if the function $\phi$ from (\ref{phi}) takes values in
$C^{1}(\gh_{-}, \gh )^{1}$.\

ii) Suppose that $\phi$ takes values in $C^{1}(\gh_{-}, \gh )^{m}$
($m\geq 1$) and let $K$, $\tilde{K}$ the curvature functions of
$\kappa$ and $\tilde{\kappa}.$ Then $\tilde{K} - K$ takes values
in $C^{2}(\gh_{-}, \gh )^{m}$ and
\begin{equation}\label{gr-F}
\tilde{K}_{m} = K_{m} +
\partial (\phi_{m}).
\end{equation}
\ep

For the following proposition, see the proof of Proposition 3.1.14
from \cite{cap} (p. 270).

\bp\label{ajut-adaugat} Let $\tilde{\kappa}, \kappa\in \Omega^{1}
(P, \gh )$ be two regular Cartan connections, which induce the
same Tanaka structure, and $\psi : P \ra \gh^{l}$ a $Q$-invariant
function ($l\geq 1$). Then $\psi^{\mathrm{iso}} (p) := p \cdot
\mathrm{exp} ( \psi (p))$, $p\in P$, is a principal bundle
automorphism and $(\psi^{\mathrm{iso}})^{*} (\tilde{\kappa} )$ is a
regular Cartan connection. Let $\tilde{\phi } : =
((\psi^{\mathrm{iso}})^{*}(\tilde{k}) - \kappa )\circ
\hat{\kappa}^{-1}$ and $\phi : = (\tilde{\kappa} - \kappa )\circ
\hat{\kappa}^{-1} $ the functions associated to the pairs $(
\kappa , (\psi^{\mathrm{iso}})^{*}(\tilde{k}))$ and $(\kappa ,
\tilde{\kappa})$, as in Proposition \ref{diff-curv}. Then
$\tilde{\phi} - \phi +\partial (\psi ): P \ra C^{1} (\gh_{-}, \gh
)^{l+1}.$ In particular, $\tilde{\phi} : P \ra C^{1} (\gh_{-}, \gh
)^{1}$ and $(\psi^{\mathrm{iso}})^{*} (\tilde{\kappa} )$ induces the same
Tanaka structure as $\kappa$ (or $\tilde{\kappa}$).

\ep

\subsection{Construction of a Cartan connection}

In this section we associate to a Tanaka structure $\pi_{G} :
P_{G}\subset \mathbb{L}(M,{\mathcal D})\ra M$ of type $(\gm , G)$
a regular Cartan connection, which induces the given Tanaka
structure. We assume that the Tanaka structure has  finite type,
i.e.  the Tanaka prolongation $\gm (\gg )^{\infty}= \gm \oplus
\gg\oplus \gg_{+}$ of the  non-positively graded Lie algebra $\gm
(\gg )=\gm\oplus \gg$  is finite dimensional. Let $\gn := \gg_{+}=
\gg_{1}\oplus \cdots \oplus \gg_{l}$ be the positive graded part
and $\gq := \gg \oplus \gn$ the non-negative part of $\gm (\gg
)^{\infty}$. We denote by $Q$ the Lie group with Lie algebra $\gq$
and we assume that it is the semidirect product $Q= G\cdot N$ of
closed subgroups $G$ and $N$, generated by $\gg$ and $\gn $
respectively.

We will construct a regular Cartan connection of type $\gm (\gg
)^{\infty }$ on the principal $Q$-bundle
$$
P:= P_{G}\times_{G}Q= P_{G}\times_{G} (G\cdot N ) = P_{G}\times N.
$$
Above $G$ acts on $Q$ by the left action. The (right) $Q$-action
which makes $P= P_{G}\times_{G}Q$ a principal $Q$-bundle is given by\\
$$ q [p_0,q']=   [p_0,q'q],\,\, q
,q' \in Q,\  p_0 \in P_{G}
$$
or in terms of  the representation $P = P_{G} \times N$ by
$$
(g\cdot n)(p_{0}, n^{\prime}) =[p_0, n'gn] =(p_{0}g,
(g^{-1}n^{\prime}g)n), \quad g \in G,\,
 n, n^{\prime}\in N,\ p_{0}\in P_{G}.
$$

\bp\label{existence-parabolic} Let  $({\mathcal D}, \pi_{G} :
P_{G}\ra M)$  be a Tanaka structure. There is a regular
Cartan connection of type  $\gm (\gg )^{\infty }$ on the principal
$Q$-bundle $\pi : P_G \times N \to M$ which induces $({\mathcal D},
\pi_{G})$.\ep

\begin{proof} In a first stage, we construct a Cartan connection of type $\gm (\gg )^{\infty}_{\leq 0}=
\gm \oplus \gg$ on $\pi_{G}:P_{G}\ra M.$ For this, we chose a
principal connection $\omega \in \Omega^{1}(P_{G}, \gg )$, with
horisontal bundle $H\subset TP_{G}$, and a gradation $TM =
(TM)_{-k}\oplus \cdots \oplus (TM)_{-1}$, consistent with the
filtration (\ref{filtration-tan}) determined by $\mathcal D$. (In
particular, $(TM)_{-1} = \mathcal D$). The gradation defines a
bundle isomorphism $i: TM \ra \mathrm{gr}_{\mathcal D}(TM).$ From
the definition of Tanaka structures, $\pi_{G} : P_{G} \ra M$ is a
$G$-subbundle of the principal bundle of $\mathcal D$-frames. In
particular, any point $p_{0}\in P_{G}$ defines an isomorphism
$\hat{p}_{0}:\gm \ra \mathrm{gr}_{\mathcal
D}(T_{\pi_{G}(p_{0})}M).$ By combining the maps $\pi_{G}$, $i$ and
$\hat{p}_{0}$, we obtain an isomorphism
$$
(\kappa_{G})^{\mathrm{hor}}_{p_{0}}: H_{p_{0}}\ra \gm , \quad
(\kappa_{G})^{\mathrm{hor}}_{p_{0}} := (\hat{p}_{0})^{-1}\circ
i\circ (\pi_{G})_{*}.
$$
Extending it to $T_{p_{0}}(P_{G})$ using the vertical parallelism
$T^{\mathrm{vert}}_{p_{0}}(P_{G})\cong \gg$, we obtain an
isomorphism $(\kappa_{G})_{p_{0}} : T_{p_{0}}P_{G}\ra \gm \oplus
\gg .$ It is easy to check that $\kappa_{G}: TP_{G}\ra \gm \oplus
\gg$ constructed in this way is a Cartan connection on $\pi_{G} :
P_{G}\ra M$, of type $\gm (\gg )^{\infty}_{\leq 0} = \gm \oplus
\gg .$

In a second stage, we extend the Cartan connection $\kappa_{G}\in
\Omega^{1}(P_{G}, \gm (\gg )^{\infty}_{\leq 0})$ to a Cartan
connection $\kappa\in \Omega^{1}(P, \gm (\gg )^{\infty} )$ on the
$Q= G\cdot N$-principal bundle $\pi : P= P_{G}\times N \ra M.$
Namely, we define $\kappa = \kappa_{G}\oplus \mu_{N}$, where
$\mu_{N}\in \Omega^{1}(N, \gn )$ is the left-invariant Maurer
Cartan form on $N$. It is easy to check that $\kappa$ is a Cartan
connection which induces the given Tanaka structure.
\end{proof}

\section{Normal Cartan connections}\label{normal}

\subsection{Admissible metrics on graded Lie algebras}\label{sect-inv}

Let $\gh = \gh_{-}\oplus \gh_{0}\oplus \gh_{+}$ be a graded Lie
algebra, $H$ a connected Lie group, with Lie algebra $\gh$, $G=
H_{0}$ and $Q$ connected subgroups with Lie algebras $\gh_{0}$ and
$\gq = \gh_{0}\oplus \gh_{+}$ respectively. We fix a Euclidean metric $\langle \cdot ,
\cdot \rangle$ on $\gh$, {\bf adapted to the gradation}, i.e.
$\langle \gh_{i}, \gh_{j}\rangle =0$ for $i\neq j.$ It induces, in
a natural way, a Euclidean metric, also denoted by $\langle \cdot,
\cdot \rangle$, on the spaces $C^{k}(\gh_{-},\gh )$, $(k\geq 0)$.
We denote by $\partial^{*} : C^{k+1}(\gh_{-},\gh ) \ra
C^{k}(\gh_{-}, \gh )$ the codifferential, or the metric adjoint of
the Lie algebra differential $\partial : C^{k}(\gh_{-}, \gh ) \ra
C^{k+1}(\gh_{-}, \gh )$ defined by (\ref{partial-exp}). Since
$\partial$ preserves the homogeneous degree and $\langle\cdot ,
\cdot \rangle$ is adapted to the gradation,  $\partial^{*}$
preserves homogeneous degree, too.

We  consider $\gh_{-}\cong \gh / \gq$  and its dual $\gh_{-}^{*}$
as $Q$-modules, with $Q$-action induced by the adjoint action of
$Q$ on $\gh .$ We use the same notation $\mathrm{Ad}^{-}_{g}$ for
the action of $g\in Q$ on $\gh_{-}$ and its dual $\gh_{-}^{*}.$ We
denote by $\mathrm{Ad}_{g}^{*} : \gh \ra \gh$ and
$(\mathrm{Ad}_{g}^{-})^{*}: \gh_{-} \ra \gh_{-}$ the metric
adjoints of $\mathrm{Ad}_{g} : \gh \ra \gh$ and
$\mathrm{Ad}^{-}_{g} :\gh_{-} \ra \gh_{-}$ respectively. 
Similarly, for any $X\in \gh$, we denote by $(\mathrm{ad}_{X})^{*}: \gh\ra \gh$ the metric adjoint of
$\mathrm{ad}_{X} : \gh \ra \gh .$  Let
$\gh_{-} \ni X \ra X^{\sharp} = \langle X, \cdot \rangle\in
\gh_{-}^{*}$ and its inverse $\gh_{-}^{*} \ni \gamma \ra
\gamma^{\flat} \in \gh_{-}.$

For any $k\geq 0$, $C^{k}(\gh_{-}, \gh )$ is a $Q$-module, with
$Q$-action given by:
\begin{equation}\label{rho-relation}
(g\cdot\phi ) (X_{1}, \cdots , X_{k}) = \mathrm{Ad}_{g}\left( \phi
(\mathrm{Ad}^{-}_{g^{-1}}(X_{1}), \cdots ,
\mathrm{Ad}^{-}_{g^{-1}} (X_{k}))\right) ,
\end{equation}
for any $g\in Q$, $\phi \in C^{k}(\gh_{-}, \gh)$ and $X_{i}\in
\gh_{-}$.

\bd\label{def-admissible} The metric $\langle\cdot , \cdot
\rangle$ is  called {\bf admissible} if the codifferentials
$\partial^{*} : C^{k+1} (\gh_{-}, \gh ) \ra C^{k} (\gh_{-}, \gh )$
are $Q$-invariant, for any $k\geq 0$. \ed

The following proposition characterizes admissible metrics.

\begin{Prop}\label{criterion}
i) The codifferential $\partial^{*} : C^{k+1} (\gh_{-}, \gh )
\rightarrow C^{k} (\gh_{-}, \gh )$ has the following expression:
for any $\alpha_{i}\in \gh_{-}^{*}$ and $V\in \gh$,
\begin{align}
\nonumber & \partial^{*} ( (\alpha_{0}\wedge \cdots \wedge
\alpha_{k} ) \otimes V)= \sum_{i=0}^{k} (-1)^{i} (\alpha_{0}\wedge
\cdots \wedge \widehat{\alpha_{i}} \wedge \cdots \wedge
\alpha_{k}) \otimes
(\mathrm{ad}_{\alpha_{i}^{\flat}})^{*} (V)\\
\label{codiff}& + \sum_{i<j} (-1)^{i+j} \left( [
\alpha_{i}^{\flat}, \alpha_{j}^{\flat}]^{\sharp} \wedge
\alpha_{0}\wedge \cdots \wedge \widehat{\alpha_{i}} \wedge \cdots
\wedge \widehat{\alpha_{j}} \wedge \cdots \wedge \alpha_{k}
\right)\otimes V.
\end{align}

ii) The metric $\langle \cdot , \cdot \rangle$ is admissible if
and only if
\begin{equation}\label{ad-star}
\mathrm{Ad}_{g}^{*}([Z,W])= [(\mathrm{Ad}^{-}_{g})^{*}(Z),
(\mathrm{Ad}_{g})^{*}(W)], \quad\forall g\in Q, \ \forall  Z\in
\gh_{-},\ \forall W\in \gh
\end{equation}
and
\begin{equation}\label{ad-star-add}
(\mathrm{Ad}^{-}_{g})^{*}([Z,W]) = [(\mathrm{Ad}^{-}_{g})^{*}(Z),
(\mathrm{Ad}^{-}_{g})^{*}(W)], \quad\forall g\in Q, \ \forall
Z,W\in \gh_{-}.
\end{equation}

\end{Prop}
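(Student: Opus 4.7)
For part (i), I would derive (\ref{codiff}) by computing $\partial^{*}$ directly as the metric adjoint of (\ref{partial-exp}). Writing $\partial = \partial_{1} + \partial_{2}$ for the two sums in (\ref{partial-exp}), I would compute $\partial_{1}^{*}$ and $\partial_{2}^{*}$ separately by pairing $\partial\phi$ against a pure tensor $\omega = (\alpha_{0}\wedge \cdots \wedge \alpha_{k}) \otimes V$, using an orthonormal basis $\{e_{a}\}$ of $\gh_{-}$ with dual basis $\{e_{a}^{\sharp}\}$. For $\partial_{1}$, the identities $\langle [X, V], W\rangle = \langle V, (\mathrm{ad}_{X})^{*}W\rangle$ on $\gh$ and $\sum_{a} \alpha (e_{a})\, e_{a} = \alpha^{\flat}$ produce the first sum in (\ref{codiff}). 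For $\partial_{2}$, the bracket $[X_{i}, X_{j}]$ inserted into the argument of $\phi$ is transferred under the pairing onto the covector factor, yielding $[\alpha_{i}^{\flat}, \alpha_{j}^{\flat}]^{\sharp}$ and hence the second sum.

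For part (ii), my plan is to exploit (\ref{codiff}) to reduce admissibility to the $Q$-invariance of each of its two summands. Take a pure tensor $\omega = (\alpha_{0}\wedge \cdots \wedge \alpha_{k})\otimes V$; by (\ref{rho-relation}) one has $g\cdot \omega = (\mathrm{Ad}_{g}^{-}\alpha_{0})\wedge \cdots \wedge (\mathrm{Ad}_{g}^{-}\alpha_{k})\otimes \mathrm{Ad}_{g}V$. The key auxiliary identity I would record is $(\mathrm{Ad}_{g}^{-}\alpha )^{\flat} = (\mathrm{Ad}_{g^{-1}}^{-})^{*}\alpha^{\flat}$, which follows from the definitions of $^{\flat}$ and the dual action on $\gh_{-}^{*}$, together with $((\mathrm{Ad}_{g}^{-})^{*})^{-1} = (\mathrm{Ad}_{g^{-1}}^{-})^{*}$. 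Substituting these into (\ref{codiff}) and comparing $\partial^{*}(g\cdot \omega )$ with $g\cdot \partial^{*}\omega$ termwise, the equality of the first summands amounts, after pairing with an arbitrary $W\in \gh$ and substituting $Z \to (\mathrm{Ad}_{g}^{-})^{*}Z$ to cancel the composition $(\mathrm{Ad}_{g^{-1}}^{-})^{*}(\mathrm{Ad}_{g}^{-})^{*} = \mathrm{Id}$, to condition (\ref{ad-star}); the equality of the second summands, after applying $^{\flat}$ and replacing $g$ by $g^{-1}$, amounts to condition (\ref{ad-star-add}).

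For the converse direction, I would isolate the two conditions from two different cohomological degrees. At the level $\partial^{*}: C^{1}(\gh_{-}, \gh ) \to C^{0}(\gh_{-}, \gh ) = \gh$ (i.e.\ $k=0$ in (\ref{codiff})), only the first summand occurs, giving $\partial^{*}(\alpha \otimes V) = (\mathrm{ad}_{\alpha^{\flat}})^{*}V$, whose $Q$-invariance is equivalent to (\ref{ad-star}). Passing to $\partial^{*}: C^{2}(\gh_{-}, \gh ) \to C^{1}(\gh_{-}, \gh )$ (case $k=1$), the first summand now transforms correctly by the already established (\ref{ad-star}), so $Q$-invariance forces the equality of the second summands on its own; since $[\alpha_{0}^{\flat}, \alpha_{1}^{\flat}]^{\sharp} \otimes V$ ranges over a spanning set of $C^{1}(\gh_{-}, \gh )$ as $\alpha_{0},\alpha_{1}, V$ vary, this gives exactly (\ref{ad-star-add}).

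The main obstacle I anticipate is not conceptual but notational: keeping the sign conventions, hat-omissions, and the distinction between $\mathrm{Ad}_{g}^{-}$ acting on $\gh_{-}$ versus on $\gh_{-}^{*}$ (which share the same symbol in the paper) straight throughout the derivation of (\ref{codiff}) in part (i). Once that bookkeeping is under control, (ii) reduces to a direct substitution using the termwise decomposition of $\partial^{*}$ provided by (i).
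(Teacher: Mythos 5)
Your proposal is correct and follows essentially the same route as the paper: part (i) by direct computation of the metric adjoint of (\ref{partial-exp}), and part (ii) by substituting $g\cdot\omega$ into (\ref{codiff}), comparing the two summands termwise, and translating each resulting identity into (\ref{ad-star}) and (\ref{ad-star-add}) via the duality relation $(\mathrm{Ad}_{g}(\alpha))^{\flat}=((\mathrm{Ad}_{g}^{-})^{*})^{-1}(\alpha^{\flat})$, which is exactly the paper's identity (\ref{adaugat}). Your explicit isolation of the cases $k=0$ and $k=1$ to separate the two conditions is a slightly more careful articulation of a step the paper leaves implicit, but it is the same argument.
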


\begin{proof}
Claim {\it i)} follows from direct computation. For claim {\it
ii)}, we notice, from (\ref{rho-relation}) and (\ref{codiff}),
that for any $\alpha : = (\alpha_{0}\wedge \cdots \wedge
\alpha_{k} ) \otimes V\in C^{k+1} (\gh_{-}, \gh )$ and $g\in Q$, 
\begin{align*}
&\partial^{*}(g\cdot\alpha ) =\\
&   \sum_{i} (-1)^{i} \left(
(\mathrm{Ad}^{-}_{g})(\alpha_{0})\wedge \cdots \wedge \widehat{
(\mathrm{Ad}^{-}_{g})(\alpha_{i})}\wedge\cdots \wedge
(\mathrm{Ad}^{-}_{g})(\alpha_{k}) \right) \otimes
(\mathrm{ad}_{(\mathrm{Ad}_{g}(\alpha_{i}))^{\flat}})^{*}
\mathrm{Ad}_{g}
(V)+\\
&  \sum_{i<j} (-1)^{i+j} \left(
[(\mathrm{Ad}_{g}(\alpha_{i}))^{\flat},
(\mathrm{Ad}_{g}(\alpha_{j}))^{\flat}]^{\sharp}\wedge
\mathrm{Ad}_{g}^{*}(\alpha_{0}) \cdots
\widehat{\mathrm{Ad}_{g}(\alpha_{i})} \cdots
\widehat{\mathrm{Ad}_{g}(\alpha_{j})}\cdots\wedge
\mathrm{Ad}_{g}(\alpha_{k})\right) \otimes \mathrm{Ad}_{g} (V).
\end{align*}
Similarly,
\begin{align*}
&g\cdot (\partial^{*}\alpha ) =  \sum_{i} (-1)^{i} \left(
\mathrm{Ad}_{g}(\alpha_{0})\wedge \cdots \wedge
\widehat{\mathrm{Ad}_{g}(\alpha_{i})}\wedge \cdots \wedge
\mathrm{Ad}_{g}(\alpha_{k})\right) \otimes \mathrm{Ad}_{g}
(\mathrm{ad}_{\alpha_{i}^{\flat}})^{*}(V)+ \\
&\sum_{i<j} (-1)^{i+j} (\mathrm{Ad}_{g}([\alpha_{i}^{\flat},
\alpha_{j}^{\flat}]^{\sharp})\wedge \mathrm{Ad}_{g}(\alpha_{0})
\cdots  \widehat{\mathrm{Ad}_{g}(\alpha_{i})} \cdots
\widehat{\mathrm{Ad}_{g}(\alpha_{j})}\cdots\wedge
\mathrm{Ad}_{g}(\alpha_{k}) ) \otimes \mathrm{Ad}_{g}(V).
\end{align*}
In particular, $\partial^{*}(g\cdot \alpha ) = g
\cdot\partial^{*}(\alpha)$ for any $\alpha \in C^{k+1}(\gh_{-}, \gh
)$ (and $k\geq 0$), if and only if for any $\alpha , \beta \in
(\gh_{-})^{*}$ and $g\in Q$,
\begin{equation}\label{prima}
(\mathrm{ad}_{(\mathrm{Ad}_{g}(\alpha ))^{\flat}} )^{*}
\mathrm{Ad}_{g}(V) = \mathrm{Ad}_{g}
(\mathrm{ad}_{\alpha^{\flat}})^{*}(V), \quad
 [(\mathrm{Ad}_{g}(\alpha ))^{\flat}, (\mathrm{Ad}_{g}(\beta
))^{\flat}]^{\sharp} = \mathrm{Ad}_{g}([\alpha^{\flat},
\beta^{\flat}]^{\sharp}).
\end{equation}
On the other hand, it is easy to check that
\begin{equation}\label{adaugat}
(\mathrm{Ad}_{g}^{-})^{*} \left( (\mathrm{Ad}_{g}(\alpha
))^{\flat}\right) = \alpha^{\flat}, \quad \forall g\in Q,\ \forall
\alpha \in \gh_{-}^{*}.
\end{equation}
Using (\ref{adaugat}), it is easy to see that relations
(\ref{prima}) are equivalent to (\ref{ad-star}) and
(\ref{ad-star-add}). For example, to prove (\ref{ad-star}) we
consider the first relation (\ref{prima}) and we take its inner
product with $W\in \gh .$ This gives $(\mathrm{Ad}_{g})^{*} [
(\mathrm{Ad}_{g} (\alpha ))^{\flat} ,W]= [\alpha^{\flat}
,(\mathrm{Ad}_{g})^{*}(W)] .$ Combining this relation with
(\ref{adaugat}), we obtain (\ref{ad-star}). In a similar way we
obtain (\ref{ad-star-add}).

\end{proof}

As an application of Proposition \ref{criterion}, we now give
examples of admissible metrics. First, we consider the situation
treated in \cite{cap}, namely $\gh = \gh_{-l}\oplus \cdots \oplus
\gh_{l}$ a semisimple Lie algebra with an $|l|$-gradation and the
standard metric $B_{\theta}$, defined in the following way. Let
$B$ be the (non-degenerate) Killing form of $\gh$ and $\theta :
\gh \ra \gh$ a Cartan involution, with $\theta (\gh_{i}) =
\gh_{-i}$, for any $i$ (see \cite{cap}, p. 342). Then $B_{\theta}$
is defined by
\begin{equation}\label{angle}
B_{\theta}(X, Y) := - B(X, \theta (Y)), \quad \forall X, Y\in \gh
.
\end{equation}
It is positive definite and adapted
to the gradation.

\bc\label{check-ss} The metric $\langle \cdot , \cdot \rangle =
B_{\theta}$  is admissible.\ec

\begin{proof} A straightforward  computation which uses  $\theta (\gh_{-}) =
\gh_{+}$, $B(\theta (X), \theta (Y) ) = B(X, Y)$ and
$B(\mathrm{Ad}_{g}(X), \mathrm{Ad}_{g}(Y)) = B(X, Y)$, for any $X,
Y\in \gh$ and $g\in Q$, implies that $\mathrm{Ad}_{g}^{*} = \theta
\mathrm{Ad}_{g^{-1}}\theta$ and $(\mathrm{Ad}_{g}^{-})^{*} =
(\theta \mathrm{Ad}_{g^{-1}}\theta )\vert_{\gh_{-}}.$ Since
$\theta$ and $\mathrm{Ad}_{g^{-1}}$ preserve Lie brackets,
relations (\ref{ad-star}) and (\ref{ad-star-add}) hold.

\end{proof}

\begin{rem}\label{non-ss}{\rm Admissible metrics exist also on
non-semisimple (graded) Lie algebras. For example, let $\gh =
\gh_{-1} \oplus \gh_{0}$ be a (non-semisimple) non-positively
graded Lie algebra of depth one. Suppose there are given
$\mathrm{Ad}_{H_{0}}$-invariant Euclidean metrics $\langle \cdot ,
\cdot \rangle_{-1}$ and $\langle \cdot , \cdot \rangle_{0}$ on
$\gh_{-1}$ and $\gh_{0}$ respectively. They induce a metric
$\langle \cdot , \cdot \rangle$ on $\gh$, with respect to which
$\gh_{-1}$ and $\gh_{0}$ are orthogonal, and an easy computation shows
that  $\mathrm{Ad}_{g}^{*} =\mathrm{Ad}_{g^{-1}}: \gh \ra \gh$ and
$(\mathrm{Ad}_{g}^{-})^{*}= \mathrm{Ad}_{g^{-1}} : \gh_{-1} \ra
\gh_{-1}$, for any $g\in H_{0}$. Therefore, conditions
(\ref{ad-star}) and (\ref{ad-star-add}) are satisfied and $\langle
\cdot , \cdot\rangle$ is admissible.
We also mention that Proposition 3.1.10 of \cite{morimoto1} provides other examples of non-semisimple 
Lie algebras which support admissible metrics (see also Remark 3.10.4 from the same
reference).}

\end{rem}

\subsection{Normal Cartan connections}

In this section we define and study normal Cartan connections of
type $\gh$, where $\gh = \gh_{-}\oplus \gh_{0} \oplus\gh_{+}$ is a
graded Lie algebra, with a fixed admissible metric $\langle \cdot
, \cdot\rangle$. We assume that the Lie group $Q$ with Lie algebra
$\gq = \gh_{0} \oplus \gh_{+}$ is the semidirect product $G\cdot
N$ of the Lie groups $G$ and $N$, with Lie algebras $\gh_{0}$ and
$\gn = \gh_{+}$ respectively. Since $\langle \cdot , \cdot
\rangle$ is positive definite, the induced metric on the complex
$\{ C^{k} (\gh_{-}, \gh ), \ k\geq 0\}$ is also positive definite.
The adjoint action of $G$ induces an action on each
space of this complex. With respect to this action, there
is a $G$-invariant Hodge decomposition
\begin{equation}\label{hodge-dec}
C^{k}(\gh_{-}, \gh ) =
\mathrm{Ker}\left(\Delta\vert_{C^{k}(\gh_{-}, \gh ) }
\right)\oplus
\partial^{*}( C^{k+1}(\gh_{-}, \gh ))  \oplus \partial
(C^{k-1}(\gh_{-}, \gh )),
\end{equation}
where $\partial : C^{k-1} (\gh_{-}, \gh ) \ra C^{k} (\gh_{-}, \gh
)$ denotes as usual the Lie algebra differential, $\partial^{*}$
its (metric) adjoint and $\Delta =
\partial\partial^{*} +\partial^{*}\partial$ the Laplacian.
Since $\langle\cdot , \cdot \rangle$ is admissible, the
codifferentials $\partial^{*} : C^{k+1} (\gh_{-}, \gh ) \ra C^{k}
(\gh_{-}, \gh )$ are ${Q}$-invariant.

\bd\label{def-normal} A regular Cartan connection $\kappa\in
\Omega^{1}(P, \gh )$ on a principal $Q$-bundle $\pi : P \ra M$ is
called {\bf normal} if its curvature function $K : P \ra
C^{2}(\gh_{-}, \gh )$ satisfies $\partial^{*} ( K )=0.$\ed

\begin{rem}\label{comentarii-norm}{\rm From $Q$-invariance, the codifferentials induce
bundle maps $\partial^{*} : \Lambda^{k+1} (M)\otimes {\mathcal
A}(M) \rightarrow \Lambda^{k} (M) \otimes {\mathcal A}(M)$ ($k\geq
0$) between forms on $M$ with values in the adjoint bundle
${\mathcal A}(M) =P \times_{Q} \gh .$ The Cartan connection is
normal if its curvature $\Omega\in \Omega^{2} (M, {\mathcal
A}(M))$ satisfies $\partial^{*} (\Omega  )=0.$ }
\end{rem}

The following simple lemma will play an important role in our
treatment of normal Cartan connections.

\bl\label{modificare}  Let $\kappa \in \Omega^{1} (P, \gh)$ be a
regular Cartan connection on a principal $Q$-bundle $\pi : P \ra
M$ and $({\mathcal D}, \pi_{G} : P_{G} \ra M)$ the induced Tanaka
structure.\

i) Let $f: P \ra C^{k} (\gh_{-}, \gh )^{l}$ be $Q$-invariant. Then
its component of degree $l$ is constant on the orbits of $N$ and
it descends to a ($G$-invariant) function $f_{l} : P_{G}\ra
C^{k}_{l} (\gh_{-}, \gh ).$\

ii)  Let $f: P_{G} \ra C^{k}_{l} (\gh_{-}, \gh )$ be a
$G$-invariant function. Then there is a $Q$-invariant function
$f^{Q} : P \ra C^{k} (\gh_{-} , \gh )^{l}$ such that $(f^{Q})_{l}
= f.$\

iii) Let $f: P \ra C^{k} (\gh_{-}, \gh )$ be $Q$-invariant, such
that $\partial^{*} (f): P \ra C^{k-1} (\gh_{-}, \gh )^{l}$, for
$l\geq 0.$ Then there is $\tilde{f}: P \ra C^{k}(\gh_{-}, \gh
)^{l}$, $Q$-invariant, such that $\partial^{*}( f) =
\partial^{*} (\tilde{f}).$
\el

\begin{proof}
Claim {\it i)} follows from the $Q$-invariance $f(u\cdot g) =
g^{-1}\cdot f(u)$ (for any $u\in P$ and $g\in Q=G\cdot N$) and
from the fact that $N$ acts trivially on $C^{k}_{l} (\gh_{-},\gh )
= C^{k} (\gh_{-}, \gh )^{l}/ C^{k} (\gh_{-}, \gh )^{l+1}.$ For
claim {\it ii)}, consider the following general situation. Assume
that $V$ is a $Q$-module (in our case $V=C^{k}(\gh_{-} , \gh )$),
endowed with a $Q$-invariant filtration $\{ V^{i}\}$ (in our case
$V^{i}= C^{k} (\gh_{-} , \gh )^{i}$) and that $N$ acts trivially
on the associated graded vector space $\mathrm{gr}(V) =
\oplus_{i}V^{i}/ V^{i+1}$ (in our case,  $V^{i}/ V^{i+1}=C^{k}_{i}
(\gh_{-}, \gh )$). The  vector bundle $P\times_{Q} V$ inherits a
filtration $(P\times_{Q}V)^{i}:= P\times_{Q} V^{i}$ and
$\mathrm{gr}_{i}(P\times_{Q}V)= (P\times_{Q} V^{i}/ V^{i+1})$ is
isomorphic to $P_{G}\times_{G} V^{i}/ V^{i+1}.$ Claim {\it ii)}
follows by noticing that sections of $\mathrm{gr}_{i} (P\times_{G}
V)$ can be lifted to sections of $P\times_{Q}V^{i}.$ Claim {\it
iii)} follows from the fact that $\partial^{*}$ is $Q$-invariant
and filtration preserving.
\end{proof}

Our main results from this section are the following two theorems,
about existence and uniqueness of normal Cartan connections which
induce a given Tanaka structure.

\bt\label{existence} Let $({\mathcal D}, \pi_{G} : P_{G}\ra M )$
be a Tanaka structure, which is induced by a regular Cartan
connection $\kappa \in \Omega^{1}(P, \gh ).$ We fix an admissible
metric on $\gh .$ Then the given Tanaka structure is induced also
by a normal Cartan connection $\kappa^{n} \in \Omega^{1}(P, \gh
)$.\et

\begin{proof}
Let $K^{(0)} =\sum_{i\geq 1} K_{i}^{(0)}$ be the curvature
function  of the regular Cartan connection $\kappa^{(0)}:=
\kappa$, where $K_{i}^{(0)}: P \ra C^{2}_{i} (\gh_{-}, \gh )$,
$i\geq 1.$ From Lemma \ref{modificare} {\it i)}, $K_{1}^{(0)}:
P_{G} \ra C^{2}_{1} (\gh_{-}, \gh )$ is $G$-invariant. Using
(\ref{hodge-dec}) for $C^{2}_{1}(\gh_{-}, \gh )$, we can decompose
$G$-invariantly $K_{1}^{(0)} = (K_{1}^{(0)})^{\mathrm{harm}} +
\partial^{*} (\eta_{1}^{(0)} ) +
\partial (\phi_{1}^{(0)} )$,
where $\eta_{1} ^{(0)}: P_{G} \ra  C^{3}_{1}(\gh_{-}, \gh )$ and
$\phi_{1}^{(0)}: P_{G} \ra  C^{1}_{1}(\gh_{-}, \gh )$. From Lemma
\ref{modificare} {\it ii)}, there is $(\phi^{(0)}_{1})^{Q} : P \ra
C^{1} (\gh_{-}, \gh )^{1}$ $Q$-invariant, with the homogeneous
degree one component equal to $\phi^{(0)}_{1}.$ Define a Cartan
connection $\kappa^{(1)}:=\kappa^{(0)} - (\phi^{(0)}_{1})^{Q}\circ
\kappa^{(0)}$ and let $K^{(1)}$ its curvature function. We apply
Proposition \ref{diff-curv} to the pair $(\kappa^{(0)},
\kappa^{(1)})$. Since $(\phi^{(0)}_{1})^{Q}$ takes values in
$C^{1}(\gh_{-}, \gh )^{1}$, we deduce that $\kappa^{(1)}$ induces
the given Tanaka structure, the difference $K^{(1)} - K^{(0)}$
takes values in $C^{2} (\gh_{-}, \gh )^{1}$ and
$$
K^{(1)}_{1}= K^{(0)}_{1} - \partial (\phi_{1}^{(0)} ) =
(K_{1}^{(0)})^{\mathrm{harm}}+\partial^{*}(\eta_{1}^{(0)})
$$
is coclosed. To summarize: $\kappa^{(1)}$ is a regular Cartan
connection, which induces the given Tanaka structure, and whose
curvature function $K^{(1)}$ has the property that its homogeneous
degree one component $K^{(1)}_{1}$ is coclosed (or $\partial^{*}
(K^{(1)}) : P \ra C^{1}(\gh_{-}, \gh )^{2}$).

Using Lemma \ref{modificare} {\it iii)}, let $\widetilde{K^{(1)}}
: P \ra C^{2}(\gh_{-}, \gh )^{2}$ $Q$-invariant, such that
$\partial^{*} (\widetilde{K^{(1)}})=
\partial^{*} (K^{(1)})$. As before, $(\widetilde{K^{(1)}})_{2}: P_{G}\ra C^{2}_{2}
(\gh_{-}, \gh )$ is $G$-invariant and from (\ref{hodge-dec}) for
$C^{2}_{2} (\gh_{-}, \gh )$, we can decompose $G$-invariantly
$(\widetilde{K^{(1)}})_{2}=
(\widetilde{K^{(1)}})_{2}^{\mathrm{harm}} +
\partial^{*}(\eta_{2}^{(1)}) +\partial (\phi_{2}^{(1)}),$ where $\eta_{2}^{(1)} : P_{G} \ra  C^{3}_{2}(\gh_{-}, \gh
)$ and $\phi_{2}^{(1)}: P_{G} \ra  C^{1}_{2}(\gh_{-}, \gh )$. Let
$(\phi_{2}^{(1)})^{Q} : P \ra C^{1} (\gh_{-}, \gh )^{2}$
$Q$-invariant, with the homogeneous degree two component equal to
$\phi_{2}^{(1)}.$ Define a new Cartan connection $\kappa^{(2)} :=
\kappa^{(1)} - (\phi^{(1)}_{2})^{Q}\circ \kappa^{(1)}$ and let
$K^{(2)}$ its curvature function. Again from Proposition
\ref{diff-curv}, applied to $(\kappa^{(1)}, \kappa^{(2)})$,
$K^{(2)} - K^{(1)}$ takes values in $C^{2}(\gh_{-}, \gh )^{2}$ and
$K^{(2)}_{2}=K^{(1)}_{2} - \partial (\phi^{(1)}_{2})$. Thus
$K^{(2)}_{1} = K_{1}^{(1)}$ is coclosed (because $K_{1}^{(1)}$ is
coclosed) and
$$
\partial^{*}( K^{(2)}_{2}) =\partial^{*}( K^{(1)}_{2})  - \partial^{*}\partial (\phi^{(1)}_{2})=
\partial^{*} (\widetilde{K^{(1)}})_{2} - \partial^{*}\partial
(\phi_{2}^{(1)} )= \partial^{*}\partial (\phi_{2}^{(1)} ) -
\partial^{*}\partial (\phi_{2}^{(1)} )=0,
$$
where in the second equality we used $\partial^{*} (K^{(1)})=
\partial^{*}( \widetilde{K^{(1)}})$ and in the third equality
we used the Hodge decomposition of  $(\widetilde{K^{(1)}})_{2}$.
To summarize: $\kappa^{(2)}$ is a regular Cartan  connection,
which induces the given Tanaka structure, and the homogeneous
degree one and two components of its curvature function are
coclosed. Repeating inductively the argument we obtain a normal
Cartan connection $\kappa^{n}\in \Omega^{1} (P, \gh )$, as
required.

\end{proof}

\bt\label{uniqueness} Let $\gh$ be a graded Lie algebra, with
$H^{1}_{\geq 1} (\gh_{-}, \gh )=0$. Fix an admissible metric on
$\gh .$ For any two normal Cartan connections $\kappa,
\tilde{\kappa} \in \Omega^{1}(P, \gh )$, which induce the same
Tanaka structure, there is a bundle automorphism $\psi : P \ra P$
such that $\tilde{\kappa}=\psi^{*} (\kappa ).$

\et

\begin{proof}
Let $K$, $\tilde{K}$, the curvature functions of $\kappa$,
$\tilde{\kappa}$ respectively, and $\phi^{(0)} := (\tilde{\kappa}
- \kappa ) \circ \hat{\kappa}^{-1} : P \ra C^{1} (\gh_{-}, \gh
)^{1}$, which is $Q$-invariant. Then $\phi_{1}^{(0)} : P_{G} \ra
C^{1}_{1} (\gh_{-}, \gh )$ is $G$-invariant and $\tilde{K}_{1} -
K_{1} = \partial (\phi_{1}^{(0)})$ (see Proposition
\ref{diff-curv}). Since $\kappa$ and $\tilde{\kappa}$ are normal,
$\tilde{K}_{1}$, $K_{1}$ and therefore also $\partial
(\phi_{1}^{(0)})$ are coclosed. The Hodge decomposition of
$C^{2}_{1} (\gh_{-}, \gh )$ implies $\partial (\phi_{1}^{(0)})=0.$
Since $H^{1}_{1} (\gh_{-}, \gh ) =0$, there is $\mu_{1}^{(0)} :
P_{G} \ra \gh_{1}$, $G$-invariant, such that $\phi_{1}^{(0)} =
\partial (\mu_{1}^{(0)}).$
Using Lemma \ref{modificare} {\it ii)}, let $(\mu_{1}^{(0)})^{Q} :
P \ra \gh^{1}$, $Q$-invariant, with the homogeneous degree one
component equal to $\mu_{1}^{(0)}$, and define $\psi_{0} : P \ra
P$ by $\psi_{0}(p) := p \cdot \mathrm{exp} (( \mu_{1}^{(0)})^{Q}
(p))$. Then $\psi_{0}^{*} (\tilde{\kappa })$ is a normal Cartan
connection, which induces the given Tanaka structure. We claim
that $\phi^{(1)}:= (\psi_{0}^{*}(\tilde{\kappa}) - \kappa ) \circ
\hat{\kappa}^{-1}$ takes values in $C^{1} (\gh_{-}, \gh )^{2}.$ To
prove this claim, we notice, from  Proposition \ref{ajut-adaugat},
that $\phi^{(1)} - \phi^{(0)} +
\partial ( (\mu_{1}^{(0)})^{Q})$ takes values in $C^{1}(\gh_{-},
\gh )^{2}$. In particular, $\phi^{(1)}_{1} = \phi_{1}^{(0)} -
\partial (\mu_{1}^{(0)}) = 0$. It follows that $\phi^{(1)} : P \ra C^{1}
(\gh_{-}, \gh )^{2}$, as claimed.

To summarize: $\psi_{0}^{*}(\tilde{\kappa})$ is a normal Cartan
connection, which induces the given Tanaka structure, and  the
function $\phi^{(1)}= (\psi_{0}^{*}(\tilde{\kappa}) - \kappa )
\circ \hat{\kappa}^{-1}$ associated to the pair $(\kappa ,
\psi^{*}_{0} (\tilde{\kappa}))$ takes values in $C^{1}(\gh_{-},
\gh )^{2}.$ An argument as before, with $\tilde{\kappa}$ replaced
by $\psi_{0}^{*}(\tilde{\kappa })$ (and $\kappa$ the same), which
uses $H^{1}_{2} (\gh_{-}, \gh )=0$, shows that the homogeneous
degree two component $\phi^{(1)}_{2}$ of $\phi^{(1)}$ is of the
form $\phi^{(1)}_{2} =
\partial (\mu^{(1)}_{2})$, for a $G$-invariant function
$\mu^{(1)}_{2} : P_{G} \ra \gh_{2}.$ Let $(\mu_{2}^{(1)})^{Q} : P
\ra \gh^{2}$, $Q$-invariant, with the homogeneous degree two
component equal to $\mu^{(1)}_{2}$, and define $\psi_{1} : P \ra
P$, by $\psi_{1}(p):= p \cdot \mathrm{exp} (
(\mu_{2}^{(1)})^{Q}(p))$. As before, one shows that the function
$\phi^{(2)}:= (\psi_{1}^{*}\psi_{0}^{*}(\tilde{\kappa}) - \kappa )
\circ \hat{\kappa}^{-1}$ associated to the pair $(\kappa ,
\psi_{1}^{*}\psi_{0}^{*} (\tilde{\kappa}))$ takes values in
$C^{1}(\gh_{-}, \gh )^{3}.$ An induction argument concludes the proof.

\end{proof}

\section{The cotangent Lie algebra}\label{examples}

Let $\gg =\gg_{-}\oplus \gg_{0}=  \gg_{-k}\oplus \cdots\oplus
\gg_{-1} \oplus\gg_{0} $ be a non-positively graded fundamental
Lie algebra.  We shall consider $\gg^{*}$ with the induced
non-negative gradation $(\gg^{*})_{i}: = (\gg_{-i})^{*}$. 
For $X\in \gg$, we denote by $L_{X}$ its coadjoint action on $\gg^{*}$,
given by $L_{X}(\alpha ) (Y) := - \alpha ( [ X, Y])$, for any $\alpha \in \gg^{*}$
and $Y\in \gg .$  The
cotangent Lie algebra $ \gh = t^{*}(\gg )$, defined as the direct
sum $\gg \oplus \gg^{*}$ with the Lie bracket
$$
[X +\xi , Y+\eta ] = [X, Y] + L_{X}(\eta ) - L_{Y}(\xi ),
$$
inherits a natural (fundamental) gradation $\gh
=\gh_{-k}\oplus\cdots \oplus \gh_{0}\oplus \cdots \oplus \gh_{k}$,
where
$$
\gh_{i} = \begin{cases}
\gg_{i}, \quad i <0\\
\gg_{0}\oplus (\gg_{0})^{*}, \quad i=0\\
(\gg_{-i})^{*},\quad i>0.
\end{cases}
$$
In the following section we compute the cohomology group $H^{1}_{\geq 1}
(\gh_{-}, \gh ) = \oplus_{l\geq 1} H^{1}_{l} (\gh_{-}, \gh ).$

\subsection{ The cohomology group $H^{1}_{\geq 1}(\gg_{-}, t^{*}(\gg )
)$}\label{cohomology-ctg}

We begin by fixing notation. We consider $\gh$, $\gg$ and
$\gg^{*}$ as $\gh_{-}$-modules, via the adjoint
representation in the cotangent Lie algebra $\gh$, and we denote
by $\partial_{V}$ the Lie algebra differential in the complex $\{
C^{k}(\gh_{-}, V ), \ k\geq 0\}$, where $V = \gh , \gg$ or
$\gg^{*}.$ We denote by $C^{k}_{l} (\gh_{-}, V)\subset
C^{k}(\gh_{-}, V)$ the subspace of forms of homogeneous degree
$l$. For  $\alpha \in C^{1}(\gh_{-}, \gh )$, we denote by
$\alpha_{\gg}\in C^{1}( \gh_{-} , \gg )$ and $\alpha_{\gg^{*}} \in
C^{1}( \gh_{-}, \gg^{*})$ the composition of $\alpha$ with the
natural projections from $\gh = \gg \oplus \gg^{*}$ onto its
factors $\gg$ and $\gg^{*}.$ The isomorphism
$$
C^{1}(\gh_{-}, \gh )
\ni \alpha \ra (\alpha_{\gg}, \alpha_{\gg^{*}})\in C^{1} (\gh_{-},
\gg ) \oplus C^{1} (\gh_{-}, \gg^{*})
$$
is compatible with homogeneous degree and
\begin{equation}\label{decomp-deriv}
\partial_{\gh} (\alpha ) =
\partial_{\gg} (\alpha_{\gg}) +\partial_{\gg^{*}}
(\alpha_{\gg^{*}}), \quad \forall \alpha\in C^{1} (\gh_{-}, \gh ).
\end{equation}

\bl For any $l\geq 1$,
\begin{equation}\label{decomp-cohom}
H^{1}_{l}( \gh_{-}, \gh ) =\mathrm{Ker} \left(
\partial_{\gg} : C^{1}_{l} (\gg_{-}, \gg )\ra C^{2}_{l} (\gg_{-}, \gg ) \right) \oplus
H^{1}_{l}(\gg_{-}, \gg^{*}).
\end{equation}
\el

\begin{proof} For $\beta \in C^{0}_{l} (\gh_{-}, \gh)= (\gg_{-l})^{*}$, $\partial_{\gh}(\beta
)\in C^{1}_{l} (\gh_{-}, \gh )$ is given by $\partial_{\gh} (\beta
)(X) = L_{X}(\beta )$, for any $X\in \gh_{-}$. In particular,
$\partial_{\gh} (\beta )$ takes values in $\gg^{*}$ and belongs to
$C^{1}_{l} (\gg_{-}, \gg^{*}).$ Our claim follows from
(\ref{decomp-deriv}).
\end{proof}

Therefore, in order to compute   $H^{1}_{l}( \gh_{-}, \gh )$, we
need to determine both terms in the right hand side of
(\ref{decomp-cohom}). This is done in the next two propositions.

\bp\label{calc-g}  i) For any $l\geq 2$,  $\mathrm{Ker}\left(
\partial_{\gg} : C^{1}_{l}(\gg_{-}, \gg ) \ra
C^{2}_{l}(\gg_{-}, \gg )\right) = \{ 0\}$;\

ii) The restriction map $C^{1}_{1} (\gg_{-}, \gg ) \ra
\mathrm{Hom}(\gg_{-1}, \gg_{0})$ defines an injection
$$
\mathrm{Ker}\left( \partial_{\gg} : C^{1}_{1}(\gg_{-}, \gg ) \ra
C^{2}_{1}(\gg_{-}, \gg )\right) \ra \mathrm{Hom}(\gg_{-1},
\gg_{0})
$$
whose image $\mathcal S (\gg) \subset \mathrm{Hom} (\gg_{-1},
\gg_{0})$ is the vector space of all linear functions $ f:
\gg_{-1}\ra \gg_{0}$ with the property: for any $X_{1}, \cdots ,
X_{s},X^{\prime}_{1}, \cdots , X^{\prime}_{s} \in \gg_{-1}$, such
that
$$
\mathrm{ad}_{X_{1}}\cdots \mathrm{ad}_{X_{s-1}} (X_{s}) =
\mathrm{ad}_{X_{1}^{\prime}}\cdots \mathrm{ad}_{X^{\prime}_{s-1}}
(X^{\prime}_{s}),
$$
the following equality holds:
\begin{align}
\nonumber&\mathrm{ad}_{f(X_{1})} \mathrm{ad}_{X_{2}} \cdots
\mathrm{ad}_{X_{s-1}} (X_{s}) + \cdots + \mathrm{ad}_{X_{1}}
\cdots \mathrm{ad}_{f(X_{s-1})} (X_{s}) + \mathrm{ad}_{X_{1}}
\cdots \mathrm{ad}_{X_{s-1}} f(X_{s})\\
\label{ad-cond}& = \mathrm{ad}_{f(X^{\prime}_{1})}
\mathrm{ad}_{X^{\prime}_{2}} \cdots \mathrm{ad}_{X^{\prime}_{s-1}}
(X_{s}^{\prime}) + \cdots + \mathrm{ad}_{X^{\prime}_{1}} \cdots
\mathrm{ad}_{f(X^{\prime}_{s-1})} (X^{\prime}_{s}) +
\mathrm{ad}_{X^{\prime}_{1}} \cdots \mathrm{ad}_{X^{\prime}_{s-1}}
f(X^{\prime}_{s}).
\end{align}
\ep

\begin{proof}
Let $\beta \in C^{1}_{l}(\gg_{-}, \gg )$ with $\partial_{\gg}
(\beta )=0$. Then
\begin{equation}\label{partial-0}
\beta ( [X, Y])= [ X, \beta (Y) ] - [ Y, \beta (X)], \quad \forall
X, Y\in \gg_{-}.
\end{equation}
Suppose first that $l\geq 2$. Since $\beta$ is of homogeneous
degree $l$, $\beta\vert_{\gg_{i}}=0$, for any $i\geq -l+1$. In
particular, $\beta\vert_{\gg_{-1}}=0$. Since $\gg_{-1}$ generates
$\gg_{-}$, relation (\ref{partial-0}) implies $\beta =0$, as
required.

It remains to consider the case  $l=1$. Then $\beta
(\gg_{i})\subset \gg_{i+1}$, for any $i\leq -1$, and from
(\ref{partial-0}) $\beta$ is determined by its restriction
$\beta\vert_{\gg_{-1}} : \gg_{-1} \ra \gg_{0}.$ It is
straightforward to check that a linear map $ f : \gg_{-1} \ra
\gg_{0}$ can be extended to a map $\beta : \gg_{-} \ra \gg$ of
homogeneous degree one, with $\partial_{\gg } (\beta )=0$, if and
only if it satisfies (\ref{ad-cond}). Our second claim follows.

\end{proof}

Next, we need to compute $H^{1}_{l}(\gh_{-}, \gg^{*})$ for $l\geq
1.$ Any $\gamma \in C^{1}_{l}(\gg_{-}, \gg^{*})$ such that
$\partial_{\gg^{*}} (\gamma )=0$, i.e.
\begin{equation}\label{F}
\gamma ([X, Y]) = \gamma (X) \circ \mathrm{ad}_{Y} -\gamma
(Y)\circ \mathrm{ad}_{X}, \quad \forall X, Y\in \gg_{-},
\end{equation}
is determined by its restriction $\gamma\vert_{\gg_{-1}} :
\gg_{-1} \ra (\gg_{1-l})^{*}.$ With this preliminary remark, we
state:

\begin{Prop}\label{calc-g-star}
Let $l\geq 1.$ The restriction map $C^{1}_{l} (\gg_{-},
\gg^{*})\ra \mathrm{Hom} (\gg_{-1}, (\gg_{1-l})^{*})$ induces an
isomorphism
$$
H^{1}_{l} (\gg_{-}, \gg^{*}) = Z_{l}(\gg ) / B_{l} (\gg).
$$
Above $Z_{1}(\gg ) = \mathrm{Hom} (\gg_{-1}, (\gg_{0})^{*})$ and
for $l\geq 2$, $Z_{l}(\gg )\subset \mathrm{Hom} (\gg_{-1},
(\gg_{1-l})^{*})$  is the vector space of all maps $\alpha :
\gg_{-1} \ra (\gg_{1-l})^{*}$ which satisfy: for any $X_{1},
\cdots , X_{s}, X_{1}^{\prime}, \cdots , X_{s}^{\prime} \in
\gg_{-1}$ ($2\leq s\leq l$) with
$$
\mathrm{ad}_{X_{1}}\mathrm{ad}_{X_{2}}\cdots \mathrm{ad}_{X_{s-1}}
(X_{s}) =
\mathrm{ad}_{X^{\prime}_{1}}\mathrm{ad}_{X^{\prime}_{2}}\cdots
\mathrm{ad}_{X^{\prime}_{s-1}} (X^{\prime}_{s}),
$$
the following relation holds:
\begin{align}
\nonumber& \sum_{i=1}^{s} (-1)^{i+1} \alpha (X_{i})
\mathrm{ad}_{\mathrm{ad}_{X_{i+1}}\mathrm{ad}_{X_{i+2}}\cdots
\mathrm{ad}_{X_{s-1}}(X_{s}) } \mathrm{ad}_{X_{i-1}}\cdots
\mathrm{ad}_{X_{1}}\vert_{\gg_{s-l}}\\
\label{ad-cot}& =\sum_{i=1}^{s} (-1)^{i+1} \alpha (X^{\prime}_{i})
\mathrm{ad}_{\mathrm{ad}_{X^{\prime}_{i+1}}\mathrm{ad}_{X^{\prime}_{i+2}}\cdots
\mathrm{ad}_{X^{\prime}_{s-1}}(X^{\prime}_{s}) }
\mathrm{ad}_{X^{\prime}_{i-1}}\cdots
\mathrm{ad}_{X^{\prime}_{1}}\vert_{\gg_{s-l}}.
\end{align}
Moreover, for any $l\geq 1$,  $B_{l}(\gg ) = \{ \hat{\beta} :
\gg_{-1} \ra (\gg_{1-l})^{*}, \ \beta \in (\gg_{-l})^{*}\} $,
where
\begin{equation}\label{beta-hat}
\hat{\beta} (X)(Z) := -\beta ( [X, Z]), \quad \forall X\in
\gg_{-1},\ Z\in \gg_{1-l}.
\end{equation}
\end{Prop}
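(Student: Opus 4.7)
The common thread is that $\gg_{-1}$ generates the graded Lie algebra $\gg_{-}$, so the cocycle relation (\ref{F}) propagates a cocycle $\gamma$ from $\gg_{-1}$ to all of $\gg_{-}$. I would argue in three steps: injectivity of the restriction map on cocycles, characterization of the image, and identification of the coboundaries.

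For injectivity, I assume $\gamma \in C^{1}_{l}(\gg_{-}, \gg^{*})$ satisfies $\partial_{\gg^{*}}(\gamma)=0$ and $\gamma\vert_{\gg_{-1}}=0$, and show $\gamma\vert_{\gg_{-s}}=0$ by induction on $s \geq 1$. The step $s-1 \Rightarrow s$ uses $\gg_{-s} = [\gg_{-1}, \gg_{-(s-1)}]$ (from the fundamental assumption on $\gg$) together with the cocycle formula $\gamma([X,Y]) = \gamma(X)\circ\mathrm{ad}_{Y} - \gamma(Y)\circ\mathrm{ad}_{X}$. Hence a cocycle is determined by $\alpha := \gamma\vert_{\gg_{-1}}$, which lies in $\mathrm{Hom}(\gg_{-1}, (\gg_{1-l})^{*})$ by the degree $l$ homogeneity.

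To characterize the image, I would iterate (\ref{F}) by induction on $s$ to obtain a formula for $\gamma(\mathrm{ad}_{X_{1}}\cdots\mathrm{ad}_{X_{s-1}}(X_{s}))$ as a signed sum of products of an $\alpha(X_{i})$-term with iterated adjoint actions. Restricted to the only nontrivial arguments $Z \in \gg_{s-l}$ (since $\gamma$ of an element of $\gg_{-s}$ must lie in $(\gg_{s-l})^{*}$), this recovers the left-hand side of (\ref{ad-cot}). Because $\gg$ has no positive part, $\gg_{s-l}=0$ whenever $s>l$ and the consistency becomes vacuous, so only the range $2\leq s\leq l$ produces nontrivial constraints. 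If two distinct nested-bracket expressions represent the same element of $\gg_{-s}$, the induced formulas must agree after restriction to $\gg_{s-l}$; this is exactly (\ref{ad-cot}). Conversely, if (\ref{ad-cot}) holds for every such coincidence and all relevant $s$, the iterated formula defines $\gamma : \gg_{-} \to \gg^{*}$ unambiguously (using that $\gg_{-s}$ is spanned by iterated brackets of depth $s$), and the cocycle identity (\ref{F}) is built into the construction. For $l=1$ no constraint appears since $\gg_{s-1}=0$ for $s \geq 2$, whence $Z_{1}(\gg) = \mathrm{Hom}(\gg_{-1}, (\gg_{0})^{*})$.

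Finally, a $0$-cochain $\beta \in C^{0}_{l}(\gh_{-}, \gg^{*}) = (\gg^{*})_{l} = (\gg_{-l})^{*}$ has coboundary $\partial_{\gg^{*}}(\beta)(X)=L_{X}(\beta)$; evaluated on $X \in \gg_{-1}$ and $Z \in \gg_{1-l}$ this yields $-\beta([X,Z])=\hat{\beta}(X)(Z)$, identifying $B_{l}(\gg)$ with the set described in (\ref{beta-hat}). The main technical obstacle is the careful bookkeeping of signs, indices and restricted domains in the iterated cocycle formula; once that formula is correctly derived, the well-definedness condition translates directly into (\ref{ad-cot}) and the remaining assertions follow immediately.
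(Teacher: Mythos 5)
Your argument follows essentially the same route as the paper's proof: restriction of cocycles to $\gg_{-1}$, injectivity via induction on the depth using that $\gg_{-1}$ generates $\gg_{-}$, identification of the image with $Z_{l}(\gg)$ through the iterated cocycle formula, and computation of $\partial_{\gg^{*}}(\beta)\vert_{\gg_{-1}}=\hat{\beta}$ for the coboundaries (your use of the already-established injectivity to conclude the coboundary identification is a minor streamlining of the paper's separate induction, and like the paper you leave the converse extension step as a routine verification). The proposal is correct and matches the paper's approach.
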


\begin{proof}
Let $l=1.$ Any $\gamma \in C^{1}_{1} (\gg_{-}, \gg^{*})$ is
trivial on $\gg_{-k}\oplus \cdots \oplus \gg_{-2}\subset \gg_{-}$
and is determined by its restriction $\gamma\vert_{\gg_{-1}} :
\gg_{-1} \ra ( \gg_{0})^{*}.$ Conversely, the trivial extension of
any map $\alpha : \gg_{-1} \ra ( \gg_{0})^{*}$ to $\gg_{-}$
belongs to $C^{1}_{1} (\gg_{-}, \gg^{*})$ and is
$\partial_{\gg^{*}}$-closed.

Let $l\geq 2$ and $\gamma \in C^{1}_{l} (\gh_{-},\gg^{*})$ such
that $\partial_{\gg^{*}} (\gamma )=0.$ Relation (\ref{F}) and an
induction argument shows that the restriction $\alpha  : = \gamma
\vert_{\gg_{-1}} : \gg_{-1} \ra (\gg_{1-l})^{*}$ satisfies
(\ref{ad-cot}). Conversely, it may be shown that any linear map
$\alpha  :  \gg_{-1} \ra (\gg_{1-l})^{*} $ which satisfies
(\ref{ad-cot}) can be extended (uniquely) to an homogenous degree
$l$  linear map $\gamma :\gg_{-} \ra \gg^{*}$, which satisfies
$\partial_{\gg^{*}} (\gamma )=0.$

So far, we proved that for any $l\geq 1$, the restriction
$C^{1}_{l} (\gg_{-}, \gg^{*}) \ra \mathrm{Hom} (\gg_{-1},
(\gg_{1-l})^{*})$  maps $\mathrm{Ker}(\partial_{\gg^{*}})$
isomorphically onto $Z_{l} (\gg ).$ To conclude the proof, we need
to show that it also maps $\mathrm{Im} (\partial_{\gg^{*}})$ onto
$B_{l} (\gg )$. For this, we first notice that for any $\beta \in
C^{0}_{l} (\gg_{-}, \gg^{*}) = (\gg_{-l})^{*}$, $\hat{\beta}$
defined in (\ref{beta-hat}) is equal to $\partial_{\gg^{*}}(\beta
) \vert_{\gg_{-1}}.$ Consider now $\gamma \in C^{1}_{l} (\gg_{-},
\gg^{*})$ with $\partial_{\gg^{*}} (\gamma )=0$. We need to show
that $\gamma\vert_{\gg_{-1}} = \hat{\beta}$ implies $\gamma =
\partial_{\gg^{*}} (\beta )$. This follows by an induction argument. More precisely, assume that
$\gamma\vert_{\gg_{-1}}  = \hat{\beta}.$ Then, for any $X, Y\in
\gg_{-1}$ and $Z\in \gg_{2-l}$,
\begin{align*}
\gamma ([ X, Y] ) (Z) &=\gamma (X) ( [ Y, Z]) - \gamma (Y) ( [X,
Z]) = - \beta ( [ X, [ Y, Z]]) + \beta ( [ Y, [X, Z]])\\
& = - \beta ( [X, Y], Z]) = \partial_{\gg^{*}} (\beta ) ( [X, Y])
(Z),
\end{align*}
i.e.  $\gamma\vert_{\gg_{-2} } = \partial_{\gg^{*}} (\beta
)\vert_{\gg_{-2}}$. By induction, we obtain $\gamma
=\partial_{\gg^{*}} (\beta )$, as required.
\end{proof}

As a consequence of our above considerations we obtain:

\bt\label{adunat} Let $l\geq 1$ and $B_{l}(\gg )$, $Z_{l}(\gg )$
and $S (\gg )$, as in Propositions \ref{calc-g} and
\ref{calc-g-star}. Then
$$
H^{1}_{1}(\gh_{-}, \gh )  = {\mathcal S}(\gg ) \oplus Z_{1}(\gg )
/ B_{1}(\gg ),\ H^{1}_{l} (\gh_{-}, \gh ) = Z_{l}(\gg )/ B_{l}(\gg
), \quad l\geq 2.
$$
\et

\subsection{Example: $|1|$-graded cotangent Lie algebra
$t^{*}(\gg)$}\label{exemplu}

As an illustration of the above computations, we consider now the
simple case when $\gg = \gg_{-1}\oplus\gg_{0} = V \oplus
\mathfrak{g}_0 $ has a non-positive gradation of depth one defined
by  a representation  $\mu : \mathfrak{g}_0 \otimes V \to V$ of a
Lie  algebra $\mathfrak{g}_0$ on a vector  space $V =
\mathfrak{g}_{-1}$. Then $\mathfrak{h}= t^{*}(\gg )$ has a
$|1|$-gradation
$$
t^{*}(\gg ) =\gh = \gh_{-1} \oplus \gh_{0} \oplus \gh_{1}=
\gg_{-1} \oplus ( \gg_{0} \oplus (\gg_{0})^{*}) \oplus
\gg_{-1}^{*} = V \oplus ( \gg_{0} \oplus (\gg_{0})^{*})\oplus
V^*.
$$
We denote  by $\mu^* : V^* \to (\mathfrak{g}_0\otimes V)^*$ the
dual map of $\mu$ and   by $ \mathfrak{g}_0^{[1]}:=
\mathfrak{g}_0\otimes V^* \cap V \otimes \Lambda^2V^*$ the first
skew-prolongation of $\mathfrak{g}_0$, considered as  a linear Lie
algebra. (For descriptions of skew-prolongations and various
applications, see e.g. \cite{nagy}). We denote by
$(\Lambda^2V^*)^{\mathfrak{g}_0}$ the space of
$\mathfrak{g}_0$-invariant skew-symmetric forms and by
$(S^2V^*)_{\mathfrak{g}_0}$ the  space of  symmetric bilinear
forms with respect to which all endomorphisms from
$\mathfrak{g}_0$ are  symmetric. From our previous computations we
obtain:

\begin{Prop} The cohomology groups $H^{1}_{l} (V, \mathfrak{h})$ are
trivial for $l\geq 3$ and
$$
H_{1}^{1} (V, \mathfrak{h})=  \mathfrak{g}_0^{[1]}\oplus (V
\otimes \mathfrak{g}_0)^*/\mu^*(V^*), \quad H^{1}_{2}(V, h ) =
(\Lambda^2V^*)^{\mathfrak{g}_0}\oplus( S^2V^*)_{\mathfrak{g}_0}.
$$

\end{Prop}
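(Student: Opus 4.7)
The plan is to specialize Theorem~\ref{adunat} to the depth-one, $|1|$-graded setting and evaluate each ingredient. Since $\gh=\gh_{-1}\oplus\gh_{0}\oplus\gh_{1}$ and $\gh_-=V=\gh_{-1}$, any $\phi\in C^{1}_{l}(V,\gh)$ lands in $\gh_{l-1}$, which is zero for $l\geq 3$; equivalently, in the notation of Proposition~\ref{calc-g-star}, $(\gg_{1-l})^{*}=0$ for $l\geq 3$, forcing both $Z_{l}(\gg)$ and $B_{l}(\gg)$ to vanish and giving $H^{1}_{l}(\gh_-,\gh)=0$, which disposes of the first claim.

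For $l=2$, first $B_{2}(\gg)=\{\hat\beta:\beta\in(\gg_{-2})^{*}\}=0$ because $\gg$ has depth one. To compute $Z_{2}(\gg)\subset\mathrm{Hom}(V,V^{*})$, I invoke the cocycle identity (\ref{F}) with $X_{1},X_{2}\in V$: because $[X_{1},X_{2}]=0$ and the only component of $\gg$ on which $\mathrm{ad}_{X}$ ($X\in V$) has non-zero image is $\gg_{0}$, the condition reduces to
\[
B(X_{1},Z\cdot X_{2})=B(X_{2},Z\cdot X_{1}),\qquad B(X,Y):=\gamma(X)(Y),\ \ Z\cdot X:=\mu(Z,X),\ Z\in\gg_{0}.
\]
Splitting $B=B_{s}+B_{a}$ into symmetric and skew parts, the symmetric half becomes $B_{s}(X_{1},Z\cdot X_{2})=B_{s}(Z\cdot X_{1},X_{2})$, i.e.\ every $Z\in\gg_{0}$ is $B_{s}$-self-adjoint and so $B_{s}\in(S^{2}V^{*})_{\gg_{0}}$; the skew half becomes $B_{a}(X_{1},Z\cdot X_{2})+B_{a}(Z\cdot X_{1},X_{2})=0$, i.e.\ infinitesimal $\gg_{0}$-invariance, so $B_{a}\in(\Lambda^{2}V^{*})^{\gg_{0}}$. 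Every such pair conversely extends (trivially on $\gg_{-k}\oplus\cdots\oplus\gg_{-2}=0$) to an admissible $\gamma$, whence $H^{1}_{2}=Z_{2}=(\Lambda^{2}V^{*})^{\gg_{0}}\oplus(S^{2}V^{*})_{\gg_{0}}$.

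For $l=1$, Theorem~\ref{adunat} gives $H^{1}_{1}=\mathcal{S}(\gg)\oplus Z_{1}/B_{1}$. For $\mathcal{S}(\gg)\subset\mathrm{Hom}(V,\gg_{0})$, the equation $\partial_{\gg}\beta=0$ with $X,Y\in V$ (once more using $[V,V]=0$) becomes $[X,f(Y)]=[Y,f(X)]$, which unpacks to $f(X)\cdot Y=f(Y)\cdot X$; this is precisely the defining identity of the first prolongation $\gg_{0}^{[1]}$. For $Z_{1}/B_{1}$, Proposition~\ref{calc-g-star} gives $Z_{1}(\gg)=\mathrm{Hom}(V,\gg_{0}^{*})\cong(V\otimes\gg_{0})^{*}$ with no cocycle constraint (the index range $2\leq s\leq 1$ is empty), while the coboundary $\beta\mapsto\hat\beta$, via $\hat\beta(X)(Z)=-\beta([X,Z])=\beta(\mu(Z,X))$, is exactly the transpose $\mu^{*}\colon V^{*}\to(V\otimes\gg_{0})^{*}$ of the representation $\mu$; hence $B_{1}(\gg)=\mu^{*}(V^{*})$ and $Z_{1}/B_{1}=(V\otimes\gg_{0})^{*}/\mu^{*}(V^{*})$. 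The one step requiring care is the symmetric/skew decomposition in the $l=2$ case, where the single identity coming from the cocycle condition has to be shown to split \emph{simultaneously} into the two distinct $\gg_{0}$-invariance conditions appearing in the proposition; everything else is a direct application of the formulas from Propositions~\ref{calc-g} and~\ref{calc-g-star}, with the depth-one hypothesis killing all the iterated-bracket terms that would otherwise appear.
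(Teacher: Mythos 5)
Your overall strategy coincides with the paper's: the proposition is presented there as a direct specialization of Theorem \ref{adunat} and Propositions \ref{calc-g} and \ref{calc-g-star} to the depth-one, $|1|$-graded case, and your treatment of $l\geq 3$ and of the quotient $Z_{1}(\gg)/B_{1}(\gg)=(V\otimes\mathfrak{g}_0)^{*}/\mu^{*}(V^{*})$ is correct and is exactly what is intended. However, the step you yourself flag as delicate --- the symmetric/skew splitting at $l=2$ --- is a genuine gap, and as written it does not go through. The cocycle condition is the single identity $B(X,Z\cdot Y)=B(Y,Z\cdot X)$ for all $X,Y\in V$ and $Z\in\mathfrak{g}_0$ (writing $Z\cdot Y$ for $\mu(Z,Y)$). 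Substituting $B=B_{s}+B_{a}$ gives
\begin{equation*}
\bigl(B_{s}(X,Z\cdot Y)-B_{s}(Z\cdot X,Y)\bigr)+\bigl(B_{a}(X,Z\cdot Y)+B_{a}(Z\cdot X,Y)\bigr)=0,
\end{equation*}
and \emph{both} brackets are skew-symmetric under the interchange $X\leftrightarrow Y$: the first is the self-adjointness defect of $Z$ with respect to $B_{s}$, the second is the invariance defect of $B_{a}$. Since they lie in the same isotypic component for the $\mathbb{Z}_{2}$-swap, the identity only says the two defects cancel, not that each vanishes; there is no parity argument separating them. Concretely, $Z_{2}(\gg)$ is the fibre product $\{(B_{s},B_{a}):\Phi(B_{s})=\Psi(B_{a})\}$ of the two defect maps $\Phi:S^{2}V^{*}\ra\mathrm{Hom}(\mathfrak{g}_0,\Lambda^{2}V^{*})$ and $\Psi:\Lambda^{2}V^{*}\ra\mathrm{Hom}(\mathfrak{g}_0,\Lambda^{2}V^{*})$, and it equals $(S^{2}V^{*})_{\mathfrak{g}_0}\oplus(\Lambda^{2}V^{*})^{\mathfrak{g}_0}$ only when $\mathrm{im}\,\Phi\cap\mathrm{im}\,\Psi=0$. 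This already fails for $V=\mathbb{R}^{2}$ and $\mathfrak{g}_0=\mathbb{R}$ acting faithfully by the rank-one projection $E_{11}$: the form $B=e^{1}\otimes e^{2}$ satisfies $B(X,Z\cdot Y)=B(Y,Z\cdot X)=0$ identically, hence lies in $Z_{2}(\gg)$, yet its symmetric part is not $\mathfrak{g}_0$-symmetric and its skew part is not $\mathfrak{g}_0$-invariant (indeed $(\Lambda^{2}V^{*})^{\mathfrak{g}_0}=0$ here, while $Z_{2}(\gg)$ is three-dimensional against the claimed two). So either an extra hypothesis guaranteeing $\mathrm{im}\,\Phi\cap\mathrm{im}\,\Psi=0$ must be imposed, or the $l=2$ answer has to be stated as the fibre product rather than the direct sum; your argument as written cannot be completed.

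A second, smaller point concerns the $l=1$ computation. From $[X,f(Y)]=[Y,f(X)]$ you correctly extract the \emph{symmetric} condition $f(X)\cdot Y=f(Y)\cdot X$, but you then identify its solution space with $\mathfrak{g}_0^{[1]}=\mathfrak{g}_0\otimes V^{*}\cap V\otimes\Lambda^{2}V^{*}$, whose defining identity is $f(X)\cdot Y=-f(Y)\cdot X$. What you have actually computed is the classical first prolongation $\mathfrak{g}_0\otimes V^{*}\cap V\otimes S^{2}V^{*}$. You should reconcile this sign with the definition of the skew-prolongation before matching your answer to the displayed formula; as it stands the identification is asserted, not proved, and the two spaces differ in general.
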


For most linear Lie  algebras $\mathfrak{g}_0$,
$(\Lambda^2V^*)^{\mathfrak{g}_0}, \, ( S^2V^*)_{\mathfrak{g}_0}$
and $\gg_{0}^{[1]}$ are trivial. However, $\mu^*(V^*)\neq   V
\otimes (\mathfrak{g}_0)^*$, if $\dim \mathfrak{g}_0 >1 $. Thus,
if $\dim \mathfrak{g}_0 >1 $, then $H_1^{1} (V, \mathfrak{h})\neq
0$.

\subsection{Remarks on admissible metrics on $t^{*}(\gg )$}\label{metrics}

Let $\langle\cdot , \cdot \rangle_{\gg}$  be a
Euclidean, adapted to the gradation, metric on  $\gg = \gg_{-k}\oplus \cdots\oplus \gg_{0}$  
and $\langle\cdot ,
\cdot \rangle_{\gg^{*}}$ the induced metric on $\gg^{*}$. Define a (Euclidean, adapted to the gradation) metric $\langle
\cdot ,\cdot \rangle$ on $\gh = t^{*}(\gg )$, which on $\gg$ and $\gg^{*}$
coincides with $\langle\cdot , \cdot \rangle_{\gg}$ and
$\langle\cdot , \cdot \rangle_{\gg^{*}}$ respectively and such
that $\gg$ is orthogonal to $\gg^{*}$. 
In this section we find obstructions for $\langle \cdot , \cdot\rangle $ to be admissible. 
Let
$$
B (X +\xi , Y +\eta ) = \xi (Y) +\eta (X), \quad X +\xi , Y + \eta
\in \gh = \gg \oplus \gg^{*}
$$
be the standard $\mathrm{Ad}_{H}$-invariant  metric of neutral
signature of $\gh$. Remark that
\begin{equation}\label{relatie}
\langle X +\xi , Y +\eta\rangle = B( X+\xi , \theta (Y +\eta
)),\quad X +\xi , Y+\eta \in \gh ,
\end{equation}
where $\theta : \gh \ra \gh$, with $\theta (X ) = X^{\sharp}$,
$\theta (\alpha ) = \alpha^{\flat}$ ($X\in \gg$, $\alpha \in
\gg^{*}$) is the involution defined by the Riemannian duality
induced by $\langle \cdot , \cdot \rangle_{\gg}$. We notice that
$\theta$ is $B$-orthogonal and $\theta (\gh_{i}) = \gh_{-i}$, for
any $-k\leq i\leq k$. Therefore, the metric $\langle \cdot , \cdot\rangle$ looks formally the same
as the standard admissible metric (\ref{angle}) on semisimple Lie algebras.

\bp\label{criterion-adm} The metric $\langle \cdot , \cdot \rangle$ is admissible if and only if 
\begin{equation}\label{theta}
\theta ( [ X+\xi , \theta  ([ Y , Z +\alpha ])  ] ) = [ \theta ([
X , \theta (Y) ]), Z +\alpha ] + [ Y, \theta  ([ X +\xi ,
\theta ( Z +\alpha )] ) ],
\end{equation}
for any $X +\xi \in \gg_{0} \oplus \gg^{*}$,
$Y\in \gg_{-}$ and $ Z+\alpha \in \gg\oplus \gg^{*}$. 
\ep 

\begin{proof}
Using (\ref{relatie}) and the properties of $\langle \cdot , \cdot
\rangle$, $B$ and $\theta$ stated above, we obtain (like in Corollary
\ref{check-ss}) $\mathrm{Ad}_{g}^{*} = \theta \mathrm{Ad}_{g^{-1}}
\theta$ and $(\mathrm{Ad}^{-}_{g})^{*} = (\theta
\mathrm{Ad}_{g^{-1}} \theta )\vert_{\gh_{-}}$, for any $g\in Q$.
Relations
(\ref{ad-star}) and (\ref{ad-star-add}) are equivalent to
the single condition:
$$
(\theta \mathrm{Ad}_{g}\theta )[ Y, Z +\alpha ] = [ (\theta
\mathrm{Ad}_{g} \theta ) (Y), (\theta \mathrm{Ad}_{g}\theta ) (Z
+\alpha ) ], \quad  \forall g\in Q,\ Y\in \gg_{-},\ Z +\alpha \in
\gh ,
$$
which is equivalent to (\ref{theta}), as required (recall that $\mathrm{Lie}(Q)
= \gh_{0} \oplus \gh_{+} = \gg_{0} \oplus \gg^{*}$).
\end{proof}

As a consequence, we obtain obstructions for $\langle \cdot , \cdot\rangle$ to be admissible.

\bc\label{consecinta} Suppose $\langle\cdot , \cdot \rangle$ is admissible. 
Then $\gg_{0}$ is abelian and $\langle\cdot , \cdot \rangle_{\gg}$ is
not $\mathrm{Ad}_{G_{0}}$-invariant.
\ec

\begin{proof}
Letting in (\ref{theta})  $X=0$, $Y, Z\in \gg_{-1}$ and $\alpha , \xi \in (\gg_{0})^{*}$, we obtain 
$[Y, \theta L_{X_{0}}(\xi ) ] =0$, where $X_{0}:= \theta (\alpha )\in \gg_{0}.$  Since the action of $\gg_{0}$
on $\gg_{-1}$ is exact and $\theta L_{X_{0}} (\xi )\in \gg_{0}$,  
it follows that  $L_{X_{0}}(\xi )=0$, i.e. $\gg_{0}$ is abelian,  as required. 
This proves the first statement.
For the second statement, we assume, by absurd, that $\langle\cdot , \cdot \rangle$ is admissible
and   $\langle\cdot , \cdot \rangle_{\gg}$ is $\mathrm{Ad}_{G_{0}}$-invariant.
We consider again relation (\ref{theta}), with $X=0.$ 
We take the $B$-inner product of this relation with $\beta \in
\gg^{*}$ and we obtain (as can be easily checked) 
\begin{equation}\label{needed-ctg}
 \langle \xi \circ
\mathrm{ad}_{\theta (\alpha ) }, \beta \circ \mathrm{ad}_{Y}
\rangle_{\gg^{*}}=\langle \xi \circ \mathrm{ad}_{\theta (\beta ) }, \alpha \circ
\mathrm{ad}_{Y} \rangle_{\gg^{*}}  , \quad \forall Y\in \gg_{-},\ \alpha , \beta,
\xi \in \gg^{*}.
\end{equation}
Let $0< j \leq k$ be fixed, $Y,Z, V \in \gg_{-j}$,  $\alpha := V^{\sharp}$, 
$\beta := Z^{\sharp}$, $\xi \in (\gg_{-j})^{*}$. 
Using the 
$\mathrm{Ad}_{G_{0}}$-invariance of $\langle\cdot , \cdot
\rangle_{\gg} $, 
\begin{equation}\label{aaa}
 \langle \xi\circ\mathrm{ad}_{\theta (\alpha ) } ,
\beta \circ\mathrm{ad}_{Y} \rangle_{\gg^{*}}  = \langle \xi \circ
\mathrm{ad}_{V}, \beta \circ \mathrm{ad}_{Y} \rangle_{\gg^{*}}  
= \sum_{i} \xi ( [ V, X_{i} ] ) \langle [ X_{i}, Z], Y
\rangle_{\gg},
\end{equation}
where $\{ X_{i} \}$ is an orthonormal basis of $\gg_{0}$.
From (\ref{aaa}), relation (\ref{needed-ctg}) becomes  
\begin{equation}\label{baze}
\sum_{i} [ Z, X_{i} ] \otimes [ V, X_{i} ] = \sum_{i} [ V, X_{i}]
\otimes [Z, X_{i} ],\quad Z,V\in\gg_{-j}. 
\end{equation}
We now show that (\ref{baze}) leads to a contradiction.
More precisely,  
the (exact) action of the abelian Lie algebra $\gg_{0}$ on $\gg_{-1}$ is
by skew-symmetric endomorphisms 
(because $\langle\cdot , \cdot\rangle_{\gg}$ is $\mathrm{Ad}_{G_{0}}$-invariant)
and it decomposes
into $1$- and $2$-dimensional irreducible representations. The
$1$-dimensional representations are trivial. The $2$-dimensional
ones are of the form (by choosing an orthonormal base in
$\gg_{-1}$)
$$
\gg_{0} \ni X \rightarrow\left(
\begin{tabular}{cc}
$0$ & $- \lambda (X)$\\
$\lambda (X)$ & $0$\\
\end{tabular}\right)
$$
where $\lambda (X) \in \mathbb{R}.$  Therefore, we can find $Z, V
\in \gg_{-1}$ (linearly independent), such that
$$
[ X, Z ] =  \lambda (X) V, \quad [X, V] = -\lambda (X) Z, \quad
\forall X\in \gg_{0}
$$
and $\lambda \in (\gg_{0})^{*}$ is non-trivial. We obtain
$\sum_{i} [ Z, X_{i} ] \otimes [V, X_{i} ] = - \sum_{i} \lambda
(X_{i})^{2} Z \otimes V$, which is non-symmetric in $V$, $Z$. This
contradicts relation (\ref{baze}). 

\end{proof}

Dmitri V. Alekseevsky: {\it Institute for Information Transmission
Problems}, B. Karetny per. 19, 101447, Moscow (Russia) and {\it
Masaryk University}, Kotlarska 2, 61137, Brno (Czech Republic),
dalekseevsky@iitp.ru\\

Liana David: {\it Institute of Mathematics 'Simion Stoilow' of the
Romanian Academy}, Research Unit 4, Calea Grivitei nr 21, Sector
1, Bucharest (Romania); liana.david@imar.ro


\begin{thebibliography}{99}

\bibitem{spiro} D. Alekseevsky, A. Spiro: {\it Prolongations of Tanaka structures and regular CR structures},
Quaderni di Matematica, vol. 9 (2001) p. 3-36.

\bibitem{cap} C. Cap, J. Slovak: {\it Parabolic geometry I},
Mathematical Survey and Monographs, vol. 154, 2009.

\bibitem{sch} A. Cap, H. Schichl: {\it Parabolic geometries and
Canonical Cartan connections}, Hokkaido Math. J. 29 (2000), no. 3,
p. 454-505.

\bibitem{tirao} L. Cagliero, P. Tirao: {\it The cohomology of the cotangent bundle
of Heisenberg groups}, Adv. Math., vol. 181 (2) (2004), p. 276-307.


\bibitem{morimoto} T. Morimoto: {\it Cartan connections associated to a sub-Riemannian structure},
Diff. Geom. Applic. 26 (2008), p. 75-78.

\bibitem{morimoto1} T. Morimoto: {\it Geometric structures on filtred manifolds},
Hokkaido Math. J. 22 (1993), p. 263-347. 


\bibitem{nagy} P.A. Nagy: {\em Prolongations of Lie algebra and
applications}, J. Lie Theory 23 (1) (2013), p. 1-33.

\bibitem{ta1} N. Tanaka: {\it On generalized graded Lie algebras
and geometric structures I}, J. Math. Soc. Japan 19 (1967), no. 2,
p. 215-254.

\bibitem{ta2} N. Tanaka: {\it On differential systems, graded Lie
algebras and pseudo-groups}, J. Math. Kyoto Univ 10 (1970), p.
1-82.

\bibitem{tanaka-ad} N. Tanaka: {\it On the equivalence problems
associated with simple graded Lie algebras}, Hokkaido Math. J. 8
(1979), p. 23-84.


\bibitem{zelenko} I. Zelenko: {\it On Tanaka's Prolongation
Procedure for Filtred Structures of Constant Type}, SIGMA (Special
Issue: 'Elie Cartan and Geometry'), DOI:10.3842/SIGMA.2009.094.


\end{thebibliography}
\end{document}